\theoremstyle{plain}
\newtheorem{thm}{Theorem}[section]
\newtheorem{lem}[thm]{Lemma}
\newtheorem{cor}[thm]{Corollary}
\theoremstyle{definition}
\newdefinition{definition}[thm]{Definition}
\newdefinition{ex}[thm]{Example}
\newdefinition{question}[thm]{Question}
\journal{Discrete Mathematics}
\begin{document}

\begin{frontmatter}

\title{Structural properties of graphs and the Universal Difference Property}

\author[1]{Katie Anders\corref{cor1}}
\fnref{fn1}
\ead{kanders@uttyler.edu}
\author[2]{Able Martinez\fnref{fn2}}
\ead{aamartinez3@mavs.coloradomesa.edu}
\author[3]{Patrick McHugh\fnref{fn3}}
\ead{pqm@andrew.cmu.edu}
\author[4]{Jenna Rogers\fnref{fn4}}
\ead{vrogers2@patriots.uttyler.edu}
\author[5]{Remi Salinas Schmeis\fnref{fn5}}
\ead{remis@nmsu.edu}
\cortext[cor1]{corresponding author.}
\affiliation[1]{organization={Department of Mathematics, University of Texas at Tyler},
addressline={3900 University Blvd.},
city={Tyler},
state={TX},
postcode={75707},
country={USA}}
\affiliation[2]{organization={Department of Mathematics and Statistics, Colorado Mesa University},
addressline={1100 North Ave.},
city={Grand Junction},
state={CO},
postcode={81501},
country={USA}}
\affiliation[3]{organization={Department of Mathematical Sciences, Carnegie Mellon University},
addressline={Wean Hall 6113},
city={Pittsburgh},
state={PA},
postcode={15213},
country={USA}}
\affiliation[4]{organization={Department of Mathematics, University of Texas at Tyler},
addressline={3900 University Blvd.},
city={Tyler},
state={TX},
postcode={75707},
country={USA}}
\affiliation[5]{organization={Department of Mathematical Sciences, New Mexico State University},
addressline={1290 Frenger Mall MSC 3MB / Science Hall 236},
city={Las Cruces},
state={NM},
postcode={88003-8001},
country={USA}}

\begin{abstract}

\noindent We study the Universal Difference Property (UDP) introduced by Altınok, Anders, Arreola, Asencio, Ireland, Sarıoğlan, and Smith, focusing on the relationship between the structural properties of a graph and UDP. We present condtions for when UDP must hold on unicyclic graphs. We then prove that if UDP does not hold on an edge-labeled graph, then it cannot hold on any subdivision of that graph.  Additionally, we show that if an edge-labeled graph satisfies the pairwise edge-disjoint path property, then the graph satisfies UDP. Lastly, we explore the relationship between UDP and subgraphs and prove that trees and cycles are the only two families of connected graphs for which UDP must hold for any edge-labeling over any ring.

\end{abstract}

\begin{keyword}

Generalized graph splines \sep Universal Difference Property
    
\end{keyword}

\end{frontmatter}


\section{Introduction}
\label{intro}


Splines have been defined on many different types of structures in various areas of mathematics, with these definitions having numerical, topological, and algebraic interpretations and applications.  Gilbert, Tymoczko, and Viel \cite{Gilbert_2016} defined generalized graph splines, choosing their definition to encompass what was common to splines as they had been used across these different areas of math.  We state their definition here as Definition \ref{def:spline} but must first give the definition of an edge-labeling on a graph.

\begin{definition}
    Given a graph $G = (V, E)$ and a commutative ring $R$ with unity, an \textit{edge-labeling} of $G$ is a function $\alpha: E \to \mathcal{I}(R)$, where $\mathcal{I}(R)$ is the set of all ideals in $R$. The pair $(G, \alpha)$ denotes a graph $G$ with edge-labeling $\alpha$.
\end{definition}

In keeping with this definition of edge-labeled graphs, we restrict our attention in this paper to commutative rings with unity.  Additionally, all graphs are assumed to be connected. 

\begin{definition}\label{def:spline}
    A \textit{generalized spline} on $(G, \alpha)$ over the ring $R$ is a function $\rho: V \to R$ such that for each edge $uv$, the difference $\rho(u) - \rho(v)$ is an element of $\alpha(uv)$.
\end{definition}

In this paper, when the context is clear, a generalized spline on an edge-labeled graph will be referred to simply as a spline. Further, given an edge-labeled graph, the ring $R$ from which the edge-labeling comes will be referred to as the base ring for the edge-labeled graph. 

\begin{ex} Figure \ref{splineex} illustrates a spline on an edge-labeled graph $(G,\alpha)$ with base ring $R=\mathbb{Z}[x]$. 
\end{ex}

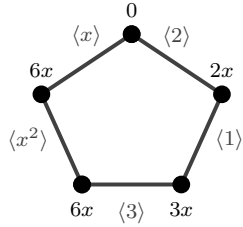
\begin{figure}[h! bt]
    \centering
    \begin{tikzpicture}
        \Vertex[size = 0.2, color = black, label = 0, position = above]{1}
        \Vertex[size = 0.2, color = black]{6}
        \Vertex[size = 0.2, color = black, x = -1.2, y = -0.8, label = $6x$, position = above]{2}
        \Vertex[size = 0.2, color = black, x = -0.66, y = -2, label = $6x$, position = below]{3}
        \Vertex[size = 0.2, color = black, x = 0.66, y = -2, label = $3x$, position = below]{4}
        \Vertex[size = 0.2, color = black, x = 1.2, y = -0.8, label = $2x$, position = above]{5}
        \Vertex[size = 0.2, color = black, x = -0.66, y = -2]{7}

        \Edge[label=$\langle x \rangle$, position={above=1mm}](1)(2)
        \Edge[label=$\langle x^2 \rangle$, position={left=1mm}](2)(3)
        \Edge[label=$\langle 3 \rangle$, position={below=1mm}](3)(4)
        \Edge[label=$\langle 1 \rangle$, position={right=1mm}](4)(5)
        \Edge[label=$\langle 2 \rangle$, position={above=1mm}](5)(1)
        
    \end{tikzpicture}
    \caption{Example of a spline over $\mathbb{Z}[x]$}
    \label{splineex}
\end{figure}

After introducing the notion of a generalized graph spline, Gilbert, Tymoczko, and Viel \cite{Gilbert_2016} proved many foundational results about graph splines and their basic properties.  In particular, they showed that given an edge-labeled graph $(G,\alpha)$ over a base ring $R$, the set of all splines on $(G,\alpha)$ is both a ring and an $R$-module.  Conditions for when the ring of generalized splines is a free $R$-module have been studied, and a major focal point in the investigation of generalized splines is the question of finding a basis for the space of splines on a given edge-labeled graph when the module of splines is free.

  Some authors narrowed their focus by fixing both the type of graph and the base ring.  For example, Handschy, Melnick, and Reinders \cite{handschy2014integergeneralizedsplinescycles} considered cycles over the integers, as did Bowden, Hagen, King, and Reinders \cite{bowden2015basesstructureconstantsgeneralized}, while Mahdavi \cite{mahdavi2016integer} investigated diamond graphs over the integers.  Other authors restrict only one of these, either the base ring or the type of graph, but not both.  Specifically, Gilbert, Tymoczko, and Viel \cite{Gilbert_2016} found a basis for the space of splines on an edge-labeled tree. In contrast, Rose and Suzuki \cite{rose2023generalized} fixed the base ring $\mathbb{Z}$, while Bowden and Tymoczko \cite{bowden2015splinesmodm} initiated work over $\mathbb{Z}/m\mathbb{Z}$.  Philbin, Swift, Tammaro, and Williams \cite{philbin2017splinesintegerquotientrings} also considered the base ring $\mathbb{Z}/m\mathbb{Z}$ and determined a process for finding a minimum generating set for splines on an edge-labeled graph over this base ring.  Additional work on finding a basis for the space of splines has been done in \cite{spline_modules} and \cite{Altinok_basis_criteria_generalized_spline_module}.   Another question of interest is whether an edge-labeled graph admits only constant splines.  Anders, Crans, Foster-Greenwood, Mellor, and Tymoczko investigated this in \cite{Anders_2020} for a class of rings including $\mathbb{Z}/m\mathbb{Z}$.


Applications of graph splines include studying piecewise polynomials on a polyhedral complex \cite{Brion1996}, as well as equivariant cohomology rings of toric varieties \cite{payne2005equivariantchowcohomologytoric}. In the latter area, Goresky, Kottwitz, and MacPherson developed GKM theory, which can be used to study algebraic varieties equipped with a torus action \cite{article}. 

 Throughout this paper, we use $\langle r \rangle$  to denote the principal ideal generated by the element $r$ of the base ring $R$, and we use $\mathcal{P}_G(u,v)$ to denote the set of all paths in the graph $G$ between two vertices $u$ and $v$ in $V(G)$. When only one graph is currently under consideration and the context is clear, we may use $\mathcal{P}(u,v)$ rather than $\mathcal{P}_G(u,v)$.

We investigate a particular property of splines on edge-labeled graphs, which is known as the Universal Difference Property (UDP).  The first step toward the definition of UDP comes from Theorem \ref{containment}.

\begin{thm}\label{containment}\cite{notes}
    Suppose $u, w \in V(G)$ for a graph $(G, \alpha)$ and $P = \langle u, v_1, \dots, v_n, w \rangle$ is a path from $u$ to $w$. Let $\alpha(P) = \alpha(uv_1) + \alpha(v_1v_2) + \dots + \alpha(v_nw)$. If $\rho$ is a spline on $(G, \alpha)$, then $\rho(u) - \rho(w) \in \alpha(P)$. Moreover, if $P_1, \dots , P_m$ are paths from $u$ to $w$, then $\rho(u) - \rho(w) \in \bigcap_{i \in [m]} \alpha(P_i)$.
\end{thm}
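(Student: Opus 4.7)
The plan is to prove the first claim by a telescoping identity and the ``moreover'' part as an immediate corollary of the first.

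For the main statement, I would start by writing the difference $\rho(u)-\rho(w)$ as a telescoping sum along the path $P$:
\[
\rho(u)-\rho(w) \;=\; (\rho(u)-\rho(v_1)) + (\rho(v_1)-\rho(v_2)) + \cdots + (\rho(v_n)-\rho(w)).
\]
By Definition~\ref{def:spline}, each summand $\rho(v_i)-\rho(v_{i+1})$ lies in the ideal $\alpha(v_iv_{i+1})$, and similarly $\rho(u)-\rho(v_1)\in\alpha(uv_1)$ and $\rho(v_n)-\rho(w)\in\alpha(v_nw)$. Since the sum of finitely many elements, each drawn from one of the edge ideals along $P$, belongs to the sum of those ideals, we obtain $\rho(u)-\rho(w)\in\alpha(uv_1)+\alpha(v_1v_2)+\cdots+\alpha(v_nw)=\alpha(P)$. (If one prefers a more formal presentation, the same argument can be phrased as induction on the length $n+1$ of $P$, with base case $n=0$ being the edge case that is literally the definition of a spline.)

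For the ``moreover'' part, I would simply apply the first statement to each path $P_i$ individually. This yields $\rho(u)-\rho(w)\in\alpha(P_i)$ for every $i\in[m]$, and hence $\rho(u)-\rho(w)\in\bigcap_{i\in[m]}\alpha(P_i)$ by definition of intersection.

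There is no real obstacle here: the result is essentially a bookkeeping observation that the spline condition is closed under summing along a path, together with the trivial remark that containment in every ideal of a family is containment in their intersection. The only care needed is to note that the identity $\alpha(uv_1)+\cdots+\alpha(v_nw)$ is well-defined as a sum of ideals in the commutative ring $R$ (which is built into the standing hypothesis), so that the telescoping argument lands in that sum rather than in some larger object.
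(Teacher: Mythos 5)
Your proof is correct. The paper itself gives no proof of this theorem (it is imported by citation from unpublished notes), but the telescoping decomposition of $\rho(u)-\rho(w)$ along the path, combined with the spline condition on each edge and the closure of an ideal sum under addition of elements from its summands, is the standard argument, and the ``moreover'' clause follows exactly as you say by applying the first part to each $P_i$ and intersecting.
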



Note that Theorem \ref{containment} starts with a spline and moves toward an element of the intersection.  In search of a type of converse to Theorem \ref{containment}, the authors of \cite{notes} asked Question \ref{big_question}, which moves from an element of the intersection toward a spline.

\begin{question}\label{big_question}

Let $(G, \alpha)$ be an edge-labeled graph. Let $u, w$ be connected vertices in $(G, \alpha)$. Let $P_1, P_2, ..., P_m$ be the paths in $G$ from $u$ to $w$. For each $i\in[m]$, let $\alpha(P_i)$ be the sum of the ideals labeling the edges in the path $P_i$. Let $x\in\bigcap_{i\in[m]}{\alpha(P_i)}$. Under what conditions does there exists a spline $\rho$ on $(G, \alpha)$ such that $\rho(u)-\rho(w)=x$?
    
\end{question}

Altınok, Anders, Arreola, Asencio, Ireland, Sarıoğlan, and Smith addressed Question \ref{big_question} in \cite{UDP}, defining the Universal Difference Property.

\begin{definition}\label{def:UDP}
    Let $R$ be a ring and $(G, \alpha)$ an edge-labeled graph. We say that $(G, \alpha)$ satisfies the \textit{Universal Difference Property} (UDP) if for every pair of vertices $u$ and $w$ and every $x \in \bigcap_{i \in [m]} \alpha(P_i)$, there exists a spline $\rho$ on $(G, \alpha)$ such that $\rho(u) - \rho(w) = x$.
\end{definition}

Note that UDP allows us to compare labels assigned by a spline to vertices in the same connected component of the graph, not just adjacent vertices.  Altınok, Anders, Arreola, Asencio, Ireland, Sarıoğlan, and Smith proved the following theorems regarding UDP.

\begin{thm}\label{UPDTreeThm}\cite[Theorem 2.4]{UDP}
    If $G$ is a tree, the Universal Difference Property holds on $(G, \alpha)$ for any edge-labeling $\alpha$ over any base ring $R$.
\end{thm}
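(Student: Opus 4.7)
The plan is to exploit the defining property of a tree: between any two vertices $u, w \in V(G)$ there is a \emph{unique} path $P = \langle u = v_0, v_1, \ldots, v_n = w\rangle$. This collapses the hypothesis of Definition \ref{def:UDP}: the intersection $\bigcap_i \alpha(P_i)$ reduces to the single sum $\alpha(P) = \alpha(v_0 v_1) + \cdots + \alpha(v_{n-1} v_n)$. So given $x \in \alpha(P)$, by definition of a sum of ideals we can write
\[
x = x_1 + x_2 + \cdots + x_n, \qquad x_i \in \alpha(v_{i-1} v_i).
\]

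Next I would construct $\rho$ explicitly on $P$ via partial sums. Set
\[
\rho(v_0) = x_1 + x_2 + \cdots + x_n, \quad \rho(v_1) = x_2 + \cdots + x_n, \ \ldots, \ \rho(v_{n-1}) = x_n, \quad \rho(v_n) = 0.
\]
Then $\rho(v_{i-1}) - \rho(v_i) = x_i \in \alpha(v_{i-1} v_i)$ for each edge of $P$, and $\rho(u) - \rho(w) = x - 0 = x$, as required.

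To finish, I would extend $\rho$ to all of $V(G)$ using the tree structure. For each vertex $v \notin P$, there is a unique path in $G$ from $v$ to $P$; let $\pi(v)$ denote the first vertex of $P$ encountered along that path (and set $\pi(v_i) = v_i$ for vertices already on $P$). Define $\rho(v) = \rho(\pi(v))$. Equivalently, the tree decomposes as $P$ with a subtree $T_i$ hanging off each $v_i$; declare $\rho$ to be the constant $\rho(v_i)$ throughout $T_i$. Every edge of $G$ either lies on $P$ (already handled) or lies inside some $T_i$, in which case its two endpoints receive the same value of $\rho$, so $\rho(a) - \rho(b) = 0 \in \alpha(ab)$.

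The argument is essentially unobstructed; the one place to be careful is the extension step, where one must verify that every non-$P$ edge has both endpoints in the same hanging subtree $T_i$. This follows from the fact that a tree contains no cycles, so once a path from $v \notin P$ leaves $P$ through $v_i$, it cannot re-enter $P$ at any $v_j$ with $j \neq i$. Thus $\pi$ is well defined and constant on each connected component of $G \setminus E(P)$ containing a vertex off $P$, which is exactly what makes the extension a valid spline.
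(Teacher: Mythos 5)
Your proof is correct and is essentially the standard argument for this result (which the paper cites from \cite{UDP} rather than reproving): reduce to the unique path, split $x$ as a sum of elements of the edge ideals, assign partial sums along the path, and extend $\rho$ constantly over the subtrees hanging off each path vertex. The point you flag about the extension step is handled exactly as you say, since deleting $E(P)$ from a tree leaves components each containing exactly one vertex of $P$.
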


\begin{thm}\label{UDPCycleThm}\cite[Theorem 2.6]{UDP}
    If $G$ is a cycle, the Universal Difference Property holds on $(G, \alpha)$ for any edge-labeling $\alpha$ over any base ring $R$.
\end{thm}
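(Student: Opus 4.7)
The plan is to use the fact that in a cycle, any two distinct vertices $u$ and $w$ are connected by exactly two paths $P_1$ and $P_2$, and these paths are internally disjoint (they share only the endpoints $u$ and $w$) with $E(P_1) \cup E(P_2) = E(G)$. This disjointness is what will make an explicit construction work.

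Concretely, given $x \in \alpha(P_1) \cap \alpha(P_2)$, I would write $P_1 = \langle u = u_0, u_1, \ldots, u_k = w \rangle$ and $P_2 = \langle u = w_0, w_1, \ldots, w_\ell = w \rangle$. By definition of $\alpha(P_j)$ as a sum of ideals, there exist elements $a_i \in \alpha(u_i u_{i+1})$ and $b_j \in \alpha(w_j w_{j+1})$ with
\[
    x = \sum_{i=0}^{k-1} a_i = \sum_{j=0}^{\ell-1} b_j.
\]
The spline $\rho$ is then defined by decreasing partial sums along each path: set $\rho(w) = 0$, and for interior vertices put
\[
    \rho(u_i) = \sum_{s=i}^{k-1} a_s \quad (0 \le i < k), \qquad \rho(w_j) = \sum_{t=j}^{\ell-1} b_t \quad (0 \le j < \ell).
\]
Both conventions assign $\rho(u) = x$, so $\rho$ is well-defined at $u$; since the paths are internally vertex-disjoint, there is no other vertex where a conflict could arise. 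For each edge $u_i u_{i+1}$ of $P_1$ we have $\rho(u_i) - \rho(u_{i+1}) = a_i \in \alpha(u_i u_{i+1})$, and analogously on $P_2$. Because $E(P_1) \cup E(P_2) = E(G)$, this verifies the spline condition on every edge of $G$. Finally $\rho(u) - \rho(w) = x - 0 = x$, as required.

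I do not expect a genuine obstacle in the cycle case; the construction is essentially forced. What is worth emphasizing, however, is \emph{why} the argument is clean: the two path decompositions of $x$ only need to agree at $u$ (and trivially at $w$), and the paths share no interior vertex where the two partial-sum sequences could disagree. This isolates the structural feature of cycles that makes UDP hold, and it signals where any generalization --- for instance to unicyclic graphs or graphs with more complicated path structure --- will require additional work to resolve conflicts on shared interior vertices.
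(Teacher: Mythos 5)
Your proof is correct. Note that this paper does not actually prove the statement --- it is quoted as Theorem 2.6 of the cited reference \cite{UDP} --- but your construction (splitting $x$ as a sum of edge-ideal elements along each of the two internally disjoint $u,w$-paths and defining $\rho$ by partial sums, which can only conflict at $u$ and $w$ where both decompositions agree) is exactly the argument one expects for this result, and your closing remark correctly identifies the edge-disjointness of the two paths as the structural feature that makes it work, which is precisely the theme the paper develops in its section on the pairwise edge-disjoint path property.
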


\begin{thm}\label{paste}\cite[Theorem 3.5]{UDP}
    Let $G_1$ and $G_2$ be connected graphs such that the Universal Difference Property holds for each and $V(G_1) \cap V(G_2) = \{z\}$. Let $G = G_1 \cup G_2$, where $G$ is created by pasting $G_1$ and $G_2$ at $z$. Then $G$ satisfies the Universal Difference Property if and only if for all $u \in V(G_1)$, $w \in V(G_2)$, we have that 
    \begin{equation}\label{eq:key pasted graphs equation}
        \bigcap \limits_{P \in \mathcal{P}_G(u,w)} \alpha(P) = \left( \bigcap \limits_{P \in \mathcal{P}_G(u,z)} \alpha(P) \right) + \left( \bigcap \limits_{P \in \mathcal{P}_G(z,w)} \alpha(P) \right)
    \end{equation}
\end{thm}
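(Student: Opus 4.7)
The plan hinges on the structural observation that $z$ is a cut vertex of $G$: since $V(G_1) \cap V(G_2) = \{z\}$, every path in $G$ from a vertex $u \in V(G_1)$ to a vertex $w \in V(G_2)$ must pass through $z$ and decomposes uniquely as a concatenation $P = P_1 \cdot P_2$ with $P_1 \in \mathcal{P}_{G_1}(u,z)$ and $P_2 \in \mathcal{P}_{G_2}(z,w)$; conversely, every such concatenation yields a path in $G$. Consequently $\alpha(P) = \alpha(P_1) + \alpha(P_2)$. In particular, whenever $u \in V(G_1)$, every path in $G$ from $u$ to $z$ already lies entirely in $G_1$, so the intersection $\bigcap_{P \in \mathcal{P}_G(u,z)} \alpha(P)$ is taken only over paths living in $G_1$ (and symmetrically for $G_2$). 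I would establish this structural fact first and use it throughout.

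For the forward direction, suppose $(G,\alpha)$ satisfies UDP and fix $u \in V(G_1)$, $w \in V(G_2)$. The containment $\supseteq$ in \eqref{eq:key pasted graphs equation} requires no splines: if $x = a + b$ with $a$ and $b$ in the two intersections on the right, then for every $P = P_1 \cdot P_2 \in \mathcal{P}_G(u,w)$ we have $a \in \alpha(P_1) \subseteq \alpha(P)$ and $b \in \alpha(P_2) \subseteq \alpha(P)$, hence $x \in \alpha(P)$. For the containment $\subseteq$, take $x$ in the intersection on the left-hand side; UDP on $G$ produces a spline $\rho$ with $\rho(u) - \rho(w) = x$. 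Writing $x = (\rho(u) - \rho(z)) + (\rho(z) - \rho(w))$ and applying Theorem \ref{containment} to each summand places the first in $\bigcap_{P \in \mathcal{P}_G(u,z)} \alpha(P)$ and the second in $\bigcap_{P \in \mathcal{P}_G(z,w)} \alpha(P)$, completing the equality.

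For the converse, suppose the equation holds for all such $u,w$. Given arbitrary $u,w \in V(G)$ and $x \in \bigcap_{P \in \mathcal{P}_G(u,w)} \alpha(P)$, I would split into two cases. If $u$ and $w$ lie in the same piece, say $G_1$, then every path between them in $G$ stays in $G_1$, so $x$ lies in the corresponding intersection over $G_1$; UDP on $G_1$ yields a spline $\rho_1$ on $G_1$ with $\rho_1(u) - \rho_1(w) = x$, which I extend to $G$ by setting $\rho \equiv \rho_1(z)$ on $V(G_2) \setminus \{z\}$ (a constant, hence spline, extension on $G_2$). If $u \in V(G_1)$ and $w \in V(G_2)$, the hypothesized equation yields $x = a + b$ with $a$ and $b$ in the respective intersections. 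UDP on $G_1$ and $G_2$ provides splines $\rho_1$ on $G_1$ and $\rho_2$ on $G_2$ with $\rho_1(u) - \rho_1(z) = a$ and $\rho_2(z) - \rho_2(w) = b$. Shifting $\rho_2$ by the constant $\rho_1(z) - \rho_2(z)$ (which preserves the spline property and all vertex differences) forces agreement at $z$, and gluing produces a spline $\rho$ on $G$ with $\rho(u) - \rho(w) = a + b = x$.

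The main obstacle is verifying that the gluing step is legitimate: I would confirm that adding a ring constant to a spline yields a spline, and that the glued function satisfies the spline condition on every edge of $G$, which is immediate because every edge lies entirely in $G_1$ or entirely in $G_2$. These checks are brief; the real content of the theorem is the cut-vertex decomposition $\alpha(P) = \alpha(P_1) + \alpha(P_2)$, which drives both directions of the equivalence.
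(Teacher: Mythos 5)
The paper states Theorem \ref{paste} only as a citation to \cite[Theorem 3.5]{UDP} and supplies no proof of its own, so there is no internal argument to compare against. Your proof is correct and is the natural one: the cut-vertex decomposition $\alpha(P)=\alpha(P_1)+\alpha(P_2)$, Theorem \ref{containment} applied to $\rho(u)-\rho(z)$ and $\rho(z)-\rho(w)$ for the forward direction, and the shift-and-glue construction for the converse, with the supporting fact that paths between two vertices of $G_1$ stay inside $G_1$ being exactly Lemma \ref{path} of the paper.
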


Next we define a Prüfer domain, the notion of which is essential for the main result of \cite{UDP}.  There are many equivalent ways to define a Prüfer domain, as shown in \cite{Jensen paper}, but we give here the same definition used in \cite{UDP}.

\begin{definition}\label{def:Prufer domain}
    Let $R$ be an integral domain. Then $R$ is a \textit{Prüfer domain} if and only if for any nonzero ideals $I$, $J$, and $K$ in $R$ we have $I \cap (J + K) = (I \cap J) + (I \cap K)$. 
\end{definition}

We are now prepared to state the main result of \cite{UDP}.

\begin{thm}\label{intersection_over_sum}\cite[Theorem 4.4]{UDP}
    Let $R$ be an integral domain. Every edge-labeled graph over $R$ satisfies UDP if and only if $R$ is a Prüfer domain.
\end{thm}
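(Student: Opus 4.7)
Both directions are proved using Theorem \ref{paste} together with the standard fact that in the modular lattice of ideals the Pr\"ufer condition of Definition \ref{def:Prufer domain} is equivalent to the identity $(I+J)\cap(I+K)=I+(J\cap K)$, which is exactly the pasting equation \eqref{eq:key pasted graphs equation} in its simplest nontrivial form.

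For the forward direction, I would argue the contrapositive. Suppose $R$ is not Pr\"ufer and choose nonzero ideals $I,J,K\subseteq R$ with $(I+J)\cap(I+K)\supsetneq I+(J\cap K)$. Let $G_1$ be the single edge $uz$ labeled $I$, and let $G_2$ be the triangle on vertices $z,a,w$ with $\alpha(za)=J$, $\alpha(aw)=\langle 0\rangle$, and $\alpha(zw)=K$. Both satisfy UDP by Theorems \ref{UPDTreeThm} and \ref{UDPCycleThm}. Paste $G_1$ and $G_2$ at $z$ to form $G$. A direct computation gives $\bigcap_{P\in\mathcal{P}_G(u,z)}\alpha(P)=I$, $\bigcap_{P\in\mathcal{P}_G(z,w)}\alpha(P)=J\cap K$ (the two paths from $z$ to $w$ have label sums $J$ and $K$), and $\bigcap_{P\in\mathcal{P}_G(u,w)}\alpha(P)=(I+J)\cap(I+K)$. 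Theorem \ref{paste} then reduces UDP on $G$ to the identity $(I+J)\cap(I+K)=I+(J\cap K)$, which fails by choice of $I,J,K$, so UDP fails on $G$.

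For the reverse direction, assume $R$ is Pr\"ufer; I would prove UDP on every $(G,\alpha)$ over $R$ by strong induction on $|E(G)|$, the base case of a single edge being trivial. In the inductive step, if $G$ has a cut vertex $z$ (in particular, if $G$ has a pendant vertex), split $G=G_1\cup G_2$ at $z$ with each $G_i$ strictly smaller. By induction both $G_i$ satisfy UDP, so by Theorem \ref{paste} UDP on $G$ reduces to the pasting equation, which in finite form is $\bigcap_{i,j}(I_i+J_j)=\bigl(\bigcap_iI_i\bigr)+\bigl(\bigcap_jJ_j\bigr)$. This follows by iterating the Pr\"ufer identity over the finitely many paths: first $\bigcap_j(I_i+J_j)=I_i+\bigcap_jJ_j$ for each $i$, then $\bigcap_i(I_i+L)=\bigl(\bigcap_iI_i\bigr)+L$.

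The main obstacle is the case where $G$ is $2$-connected and is not a single cycle, since Theorem \ref{paste} applies only to single-vertex pastings. I would handle this by ear decomposition: $G$ is built from a base cycle (UDP by Theorem \ref{UDPCycleThm}) by successively attaching ears. Each ear can be broken into first attaching its internal vertices as a pendant path to the current graph (a sequence of single-vertex pastings already handled) and then adjoining one closing edge between two existing vertices; this last step does not reduce to Theorem \ref{paste} and is the crux. For it, given $x$ in the target path-intersection on the augmented graph, I would produce a spline $\rho_0$ on the graph before the closing edge via the inductive hypothesis and modify $\rho_0$ so that the closing-edge constraint $\rho(a)-\rho(b)\in\alpha(e)$ is met. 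The essential algebraic content, and the hardest part of the argument, is showing that the ideal of realizable $(a,b)$-differences among splines with prescribed $(u,w)$-difference meets $\alpha(e)$; this is forced precisely by the iterated Pr\"ufer identity applied to the relevant path ideals.
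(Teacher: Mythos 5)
First, note that this paper does not prove Theorem \ref{intersection_over_sum} at all: it is quoted verbatim from \cite{UDP} (their Theorem 4.4), so there is no in-paper proof to compare against. Judged on its own terms, your forward direction is essentially correct and in fact mirrors constructions that do appear in this paper (Example \ref{ex:unicyclic graph not UDP}, Figure \ref{fig:4-vertex unicyclic graph}, and the proof of Theorem \ref{thm:complete characterization}): a non-Pr\"ufer domain yields nonzero ideals violating the dual distributive identity $(I+J)\cap(I+K)=I+(J\cap K)$ (equivalent to Definition \ref{def:Prufer domain} since the ideal lattice is modular), and pasting a pendant edge labeled $I$ onto a cycle whose two $z$--$w$ arcs carry $J$ and $K$ makes Theorem \ref{paste} fail for the pair $(u,w)$. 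That half is fine.

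The reverse direction, however, has a genuine gap exactly where you flag it. Your induction disposes of cut vertices via Theorem \ref{paste} plus finite iteration of the distributive identity, but the $2$-connected, non-cycle case is the entire content of the theorem, and your ear-decomposition reduction leaves the closing-edge step as an unproved assertion. Concretely: after adding an edge $e=ab$ to a graph $H$ already known to satisfy UDP, you must show that for every $x$ in the (now smaller) intersection of path ideals of $H+e$ there is a spline $\rho$ on $H$ with $\rho(u)-\rho(w)=x$ \emph{and} $\rho(a)-\rho(b)\in\alpha(e)$. The set of achievable values $\rho(a)-\rho(b)$ over splines with $\rho(u)-\rho(w)=x$ is a coset of an ideal, not an ideal, and the claim that this coset meets $\alpha(e)$ is precisely the hard algebraic statement; saying it ``is forced by the iterated Pr\"ufer identity applied to the relevant path ideals'' names the hoped-for mechanism without supplying the argument. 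You would at minimum need to identify that coset explicitly (via Theorem \ref{containment} applied to $H$), relate the path-ideal intersections of $H+e$ to those of $H$ (the new graph has many new paths through $e$, not just one), and then show membership of $x$ in the appropriate sum of intersections. As written, the proposal proves the easy direction and reduces the hard direction to an unestablished lemma, so it does not constitute a proof of the theorem.
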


The existence of a commutative ring with unity having three nonzero ideals for which the equation in Definition \ref{def:Prufer domain} does not hold is essential to proving several of our theorems. Many examples of such rings exist, including any integral domain that is not a Prüfer domain.  Specifically, the polynomial ring $\mathbb{Z}[x]$ is an integral domain but is not a Prüfer domain.

In Section \ref{unicyclic}, we build upon previous work to find conditions under which UDP holds on a unicyclic graph. In Section \ref{extensions}, we examine subdivisions and multigraphs, proving that if a graph does not satisfy UDP then no subdivision of the graph satisfies UDP, and constructing an edge-labeling on the diamond graph such that UDP does not hold.  See Figure \ref{DG}(c). In Section \ref{ped}, we explore the Pairwise Edge Disjoint Path Property of edge-labeled graphs and show that any graph having this property must satisfy UDP for any edge-labeling over any ring.  Finally, in Section \ref{sec: subgraphs}, we follow up on the work done in \cite{UDP} by providing a complete characterization of the graphs for which UDP must hold for any edge-labeling over any ring.


\section{Unicyclic Graphs and UDP} 
\label{unicyclic}

In this section, we examine the conditions under which UDP must hold on edge-labeled unicyclic graphs.  These conditions are outlined in Theorem \ref{thm: full unicyclic}, the main result of this section.  We begin with an explanation as to why we chose to investigate UDP on edge-labeled unicyclic graphs.  

In \cite{UDP}, Altınok et al. showed that UDP must hold on any edge-labeled tree or cycle over any ring $R$. Theorem \ref{paste} gives a necessary and sufficient condition for UDP to hold on a graph $G$ created by pasting two graphs $G_1$ and $G_2$ at a single vertex, provided $G_1$ and $G_2$ both satisfy UDP.  Because we know that trees and cycles satisfy UDP, we wanted to consider pasting trees and cycles together to form unicyclic graphs and then use Theorem \ref{paste} to determine when such unicyclic graphs would satisfy UDP.

\begin{definition}
    A \textit{unicyclic graph} is a connected graph that contains exactly one cycle.
\end{definition}

Because any unicyclic graph can be formed by pasting trees onto vertices of a cycle, it seems a natural guess that UDP must hold on any unicyclic graph over any ring $R$.  However, we found the following counterexample to that conjecture.
\begin{ex}\label{ex:unicyclic graph not UDP}
    Let $R$ be a ring with nonzero ideals $I, J, K$ such that $I+(J\cap K) \neq (I+J)\cap(I+K)$. Then by Theorem \ref{paste}, it follows that UDP does not hold on the edge-labeled unicyclic graph in Figure \ref{unicyclenoUDP}.
\end{ex}

 \begin{figure}[h! bt]
\centering
\begin{tikzpicture}
    \Vertex[size=0.2, color=black]{1}
    \Vertex[size=0.2, color=black, x=-1, y=-1, label=z, position=above, fontcolor=red]{2}
    \Vertex[size=0.2, color=black, y=-2]{3}
    \Vertex[size=0.2, color=black, x=1, y=-1, label=w, position=above, fontcolor=red]{4}
    \Vertex[size=0.2, color=black, x=-3, y=-1, label=u, position=above, fontcolor=red]{5}
    \Edge[label=I, position={above=1mm}, color=black](2)(5);
    \Edge[label=J, position={above=1mm}, color=black](1)(2);
    \Edge[label=J, position={above=1mm}, color=black](1)(4);
    \Edge[label=K, position={below=1mm}, color=black](2)(3);
    \Edge[label=K, position={below=1mm}, color=black](3)(4);
\end{tikzpicture}
\caption{An edge-labeled unicyclic graph for which UDP does not hold}
\label{unicyclenoUDP}
\end{figure}
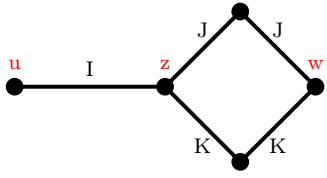

Now that we have seen an example of an edge-labeled unicyclic graph on which UDP does not hold, we pose the following question.

\begin{question}\label{question: general unicyclic}
Under what conditions must UDP hold on an edge-labeled unicyclic graph? 
\end{question}

We will answer Question \ref{question: general unicyclic} in Theorem \ref{thm: full unicyclic}, but first we consider the answer for a specific type of unicyclic graph, one formed by pasting two trees onto a cycle at distinct vertices of the cycle. We begin with a definition from \cite{Gilbert_2016}.

\begin{definition}
Given an edge-labeled graph $(G, \alpha)$, a \textit{edge-labeled subgraph} of $(G, \alpha)$ is an edge-labeled graph $(H, \alpha')$, where $H$ is a subgraph of $G$ and $\alpha'$ is an edge-labeling such that for any $e\in{E(H)}$, $\alpha'(e)=\alpha(e)$.

\end{definition}

We now consider the relationship between UDP on an edge-labeled graph and UDP on a certain type of edge-labeled subgraph.  Note that the interplay between UDP and more general edge-labeled subgraphs will be explored in Section \ref{sec: subgraphs}.

\begin{thm}\label{sub}
    Let $(G, \alpha)$ be an edge-labeled graph and $(G',\alpha')$ an edge-labeled subgraph of $(G,\alpha)$ such that for any $u,v \in V(G')$ we have $\mathcal{P}_{G'}(u,v) = \mathcal{P}_G(u,v)$. If UDP holds on $(G,\alpha)$, then UDP holds on $(G',\alpha')$.
\end{thm}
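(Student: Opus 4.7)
The plan is to produce the desired spline on $(G',\alpha')$ simply by restricting a spline on $(G,\alpha)$ guaranteed by UDP, and the hypothesis on paths is precisely what lets the hypotheses of UDP translate between the two settings.

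\medskip

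First I would fix arbitrary $u, w \in V(G')$ and arbitrary
$x \in \bigcap_{P \in \mathcal{P}_{G'}(u,w)} \alpha'(P)$,
since to verify UDP on $(G',\alpha')$ it suffices to build a spline $\rho'$ on $(G',\alpha')$ with $\rho'(u)-\rho'(w)=x$. The first step is to translate the ``input'' of UDP from $G'$ to $G$. Because $(G',\alpha')$ is an edge-labeled subgraph, we have $\alpha'(e)=\alpha(e)$ for every $e \in E(G')$, so $\alpha'(P)=\alpha(P)$ for every $P \in \mathcal{P}_{G'}(u,w)$. Combined with the hypothesis $\mathcal{P}_{G'}(u,w) = \mathcal{P}_G(u,w)$, this gives
\[
\bigcap_{P \in \mathcal{P}_{G'}(u,w)} \alpha'(P) \;=\; \bigcap_{P \in \mathcal{P}_G(u,w)} \alpha(P),
\]
so $x$ belongs to the intersection computed in $(G,\alpha)$ as well.

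\medskip

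Next I would invoke UDP on $(G,\alpha)$: there exists a spline $\rho : V(G) \to R$ with $\rho(u) - \rho(w) = x$. Set $\rho' := \rho|_{V(G')}$ and verify it is a spline on $(G',\alpha')$. Given any edge $ab \in E(G')$, this edge also lies in $E(G)$, and the spline condition for $\rho$ yields $\rho(a) - \rho(b) \in \alpha(ab) = \alpha'(ab)$; hence $\rho'(a) - \rho'(b) \in \alpha'(ab)$. Moreover $\rho'(u) - \rho'(w) = \rho(u) - \rho(w) = x$, and we are done.

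\medskip

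There is no real obstacle here, but the one point to be careful about is the first step: we need every path in $G$ between the chosen $u$ and $w$ to live inside $G'$, which is exactly the hypothesis $\mathcal{P}_{G'}(u,v) = \mathcal{P}_G(u,v)$. Without this assumption one could have extra paths in $G$ contributing additional ideals to the intersection, so that an element $x$ in the $G'$-intersection need not lie in the $G$-intersection and UDP on $(G,\alpha)$ would not apply. Once this is observed, the restriction-of-a-spline argument goes through directly.
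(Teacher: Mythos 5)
Your proof is correct and follows essentially the same route as the paper's: identify the two path-ideal intersections using the hypothesis $\mathcal{P}_{G'}(u,v)=\mathcal{P}_G(u,v)$ together with the agreement of $\alpha$ and $\alpha'$ on $E(G')$, invoke UDP on $(G,\alpha)$, and restrict the resulting spline to $V(G')$, checking the spline condition edge by edge. No differences worth noting.
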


\begin{proof}
    Let $(G,\alpha)$ and $(G',\alpha')$ be as in the statement of the theorem and assume that UDP holds on $(G,\alpha)$.  Let $a,b \in V(G')$. We have that $\bigcap_{P \in\mathcal{P}_{G(a,b)}}\alpha(P) = \bigcap_{P \in\mathcal{P}_{G'(a,b)}}\alpha '(P)$. Let $x \in \bigcap_{P \in\mathcal{P}_{G'(a,b)}}\alpha '(P)$. Then $x \in \bigcap_{P \in\mathcal{P}_{G(a,b)}}\alpha(P)$. Since UDP is satisfied on $(G,\alpha)$, there exists a spline $\rho: V(G)\rightarrow R$ such that $\rho(a) - \rho(b) = x$. Define $\rho':V(G')\rightarrow R$ by $\rho'(w)=\rho(w)$ for all $w \in V(G')$. Let $z_1$ and $z_2$ be arbitrary adjacent vertices in $G'$. Then $z_1$ and $z_2$ must be adjacent vertices in $G$ because $G'$ is a subgraph of $G$.  Since $\rho$ is a spline on $G$, we have that $\rho'(z_1)-\rho'(z_2)=\rho(z_1)-\rho(z_2)\in\alpha(z_1z_2)=\alpha'(z_1z_2)$, so $\rho'$ is a spline on $G'$. Furthermore, $\rho'(a) - \rho'(b) = \rho(a) - \rho(b) = x$. Thus UDP holds on $(G',\alpha')$.
\end{proof}

We now illustrate a connection between the condition in the hypothesis of Theorem \ref{sub} and the graphs considered in Theorem \ref{paste}.

\begin{lem}\label{path}
Let $G_1$ and $G_2$ be graphs such that $V(G_1) \cap V(G_2) = \{z\}$ and let $G = G_1 \cup G_2$, where $G$ is created by pasting $G_1$ and $G_2$ at $z$. Then for any $u,v \in V(G_1)$, $\mathcal{P}_{G_1}(u,v) = \mathcal{P}_{G}(u,v)$.
\end{lem}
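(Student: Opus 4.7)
The plan is to verify the set equality by establishing both inclusions. The inclusion $\mathcal{P}_{G_1}(u,v) \subseteq \mathcal{P}_G(u,v)$ is immediate, since $G_1$ is a subgraph of $G$ and any path in $G_1$ uses vertices and edges that are already present in $G$, so it is a path in $G$ connecting the same pair of endpoints.

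For the reverse inclusion, I would take an arbitrary $P = \langle u = x_0, x_1, \ldots, x_n = v \rangle$ in $\mathcal{P}_G(u,v)$ and argue first that every vertex $x_i$ lies in $V(G_1)$. The key structural observation is that since $V(G_1) \cap V(G_2) = \{z\}$ and every edge of $G$ lies in $E(G_1) \cup E(G_2)$, no edge of $G$ can join a vertex of $V(G_1) \setminus \{z\}$ to a vertex of $V(G_2) \setminus \{z\}$. In particular, any step of $P$ that crosses from the $G_1$-side to the $G_2$-side must have $z$ as one of its endpoints. Suppose for contradiction that some $x_j$ lies in $V(G_2) \setminus \{z\}$. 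Since $u \in V(G_1)$, the sub-walk $\langle x_0, \ldots, x_j \rangle$ must cross from $V(G_1)$ to $V(G_2)$, and by the preceding observation $z$ must appear somewhere in that sub-walk. Applying the same reasoning to $\langle x_j, \ldots, x_n \rangle$, whose endpoint $v$ lies in $V(G_1)$, forces $z$ to appear in the second sub-walk as well. This places $z$ at two distinct positions of the sequence $x_0, x_1, \ldots, x_n$, contradicting the fact that a path has no repeated vertices. Hence every $x_i \in V(G_1)$, and each edge $x_i x_{i+1}$ then has both endpoints in $V(G_1)$, which forces it to lie in $E(G_1)$ (any edge of $E(G_2)$ with both endpoints in $V(G_1)$ would have to be a loop at $z$). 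Therefore $P \in \mathcal{P}_{G_1}(u,v)$.

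The main subtlety, rather than a genuine obstacle, is simply pinning down the convention that a path has no repeated vertices; under this standard convention the contradiction just sketched closes the argument in a single step. The boundary cases $u = z$ or $v = z$ cause no trouble, because the hypothetical excursion into $V(G_2) \setminus \{z\}$ still requires $z$ to appear a second time beyond the chosen endpoint.
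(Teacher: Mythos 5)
Your proof is correct and follows essentially the same route as the paper's: both establish the easy inclusion $\mathcal{P}_{G_1}(u,v)\subseteq\mathcal{P}_G(u,v)$ and then show that any path leaving $V(G_1)$ would have to visit $z$ twice, contradicting the definition of a path. Your version is somewhat more detailed (explicitly justifying why $z$ must occur in each sub-walk and why an edge with both endpoints in $V(G_1)$ lies in $E(G_1)$), but the underlying argument is the same.
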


\begin{proof}
    Let $u,v\in V(G_1)$.  Since any path in $G_1$ from $u$ to $v$ is also a path in $G$ from $u$ to $v$, we have $\mathcal{P}_{G_1}(u,v) \subseteq \mathcal{P}_{G}(u,v)$. Now suppose that $\mathcal{P}_{G}(u,v) \nsubseteq \mathcal{P}_{G_1}(u,v)$. Then there is some path $P \in \mathcal{P}_{G}(u,v)$ such that $P \not \in \mathcal{P}_{G_1}(u,v),$ so $P$ must contain a least one vertex $w$ such that $w\in V(G)$ but $w\notin V(G_1)$. Using the construction of $G$ and the fact that $P$ is a path between two vertices in $G_1$, we see that $P$ must visit the vertex $z$ twice. This is a contradiction to $P$ being a path, so it must be that $\mathcal{P}_{G}(u,v) \subseteq \mathcal{P}_{G_1}(u,v)$.

\end{proof}

We are now prepared to answer Question \ref{question: general unicyclic} for the case in which the unicyclic graph is created by pasting two trees onto a cycle at distinct vertices of the cycle.

\begin{thm}\label{cycle2tree}
    Let $C$ be an edge-labeled cycle on $n$ vertices $v_1, v_2, \dots, v_n$. Let $1 \leq i < j \leq n.$ Let $T_i$ and $T_j$ be edge-labeled trees. Let $G_1$ be the graph formed by pasting $T_i$ to $C$ at the vertex $v_i$ and let $G_2$ be the graph formed by pasting $T_j$ to $G_1$ at the vertex $v_j$. Let $\alpha$ be the edge-labeling on $G_2$ such that each edge in $G_2$ has the same label it had in $C$, in $T_i$, or in $T_j$. Consider the following conditions: 
\begin{enumerate}[(i)]
    \item for every $u \in V(C)$ and $w \in V(T_i)$,
    \begin{align*}
        \bigcap_{P\in\mathcal{P}_{G_1}(u,w)} \alpha(P) = \left(\bigcap_{P\in\mathcal{P}_{C}(u,v_i)} \alpha(P)\right) + \left(\bigcap_{P\in\mathcal{P}_{T_i}(v_i,w)} \alpha(P)\right)
    \end{align*}
    \item for every $r \in V(G_1)$ and $s \in V(T_j)$,
    \begin{align*}
        \bigcap_{P\in\mathcal{P}_{G_2}(r,s)} \alpha(P) = \left(\bigcap_{P\in\mathcal{P}_{G_1}(r,v_j)} \alpha(P)\right) + \left(\bigcap_{P\in\mathcal{P}_{T_j}(v_j,s)} \alpha(P)\right)
    \end{align*}
\end{enumerate}
Both conditions (i) and (ii) hold if and only if UDP holds on $G_2$.
\end{thm}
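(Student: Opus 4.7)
The plan is to reduce the statement to two applications of Theorem \ref{paste}, using the fact that UDP is automatic on the base pieces $C$, $T_i$, and $T_j$ by Theorems \ref{UPDTreeThm} and \ref{UDPCycleThm}. The graph $G_2$ is built in two pasting steps: first $G_1 = C \cup T_i$ glued at the single vertex $v_i$, and then $G_2 = G_1 \cup T_j$ glued at the single vertex $v_j$. Each of these pastings meets the hypotheses of Theorem \ref{paste} provided the inner graphs satisfy UDP, so the job is to translate conditions (i) and (ii) into the corresponding instances of the key pasted-graphs equation and bootstrap through the intermediate graph $G_1$.

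For the backward direction, I would argue in order. Since $C$ and $T_i$ both satisfy UDP and $V(C) \cap V(T_i) = \{v_i\}$, condition (i) is exactly the equation of Theorem \ref{paste} for the decomposition $G_1 = C \cup T_i$, so UDP holds on $(G_1,\alpha)$. With UDP now in hand on both $G_1$ and $T_j$, and $V(G_1) \cap V(T_j) = \{v_j\}$, condition (ii) is exactly the hypothesis of Theorem \ref{paste} for the decomposition $G_2 = G_1 \cup T_j$, and so UDP holds on $(G_2, \alpha)$.

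For the forward direction, assume UDP holds on $(G_2, \alpha)$. I first need UDP on the intermediate graph $G_1$ in order to be in a position to quote Theorem \ref{paste} for either pasting. Applying Lemma \ref{path} to the decomposition $G_2 = G_1 \cup T_j$ yields $\mathcal{P}_{G_1}(u,v) = \mathcal{P}_{G_2}(u,v)$ for all $u,v \in V(G_1)$, so Theorem \ref{sub} transfers UDP from $G_2$ down to $G_1$. Now Theorem \ref{paste} applied to $G_1 = C \cup T_i$ gives condition (i) as a consequence of UDP on $G_1$, and Theorem \ref{paste} applied to $G_2 = G_1 \cup T_j$ gives condition (ii) as a consequence of UDP on $G_2$.

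I do not anticipate any real technical obstacle; the only point that requires care is the forward direction, where one must first establish UDP on $G_1$ (via Lemma \ref{path} and Theorem \ref{sub}) before Theorem \ref{paste} can be invoked to extract condition (ii). Beyond that, the argument is essentially just the observation that the two pasting steps decouple cleanly because the pasting vertices $v_i$ and $v_j$ are distinct vertices of $C$, so each application of Theorem \ref{paste} sees a valid one-vertex intersection.
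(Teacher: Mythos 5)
Your proposal is correct and follows essentially the same route as the paper: both directions reduce to two applications of Theorem \ref{paste}, with the intermediate step of transferring UDP from $G_2$ down to $G_1$ via Lemma \ref{path} and Theorem \ref{sub}. The only cosmetic difference is that the paper establishes condition (i) by contradiction where you argue directly through the biconditional in Theorem \ref{paste}; the underlying logic is identical.
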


\begin{proof}

  Let the cycle, trees, vertices, graphs, and edge-labeling $\alpha$ be as in the statement of the theorem.

    For the forward direction, assume (i) and (ii) hold. Note that $V(C) \cap V(T_i) = \{v_i\}$.  Since $G_1$ is created by pasting $C$ and $T_i$ at $v_i$ and Condition (i) holds, we can apply Theorem \ref{paste} to conclude that UDP holds on $G_1$. Similarly, $V(G_1) \cap V(T_j)= \{v_j\}$, $G_2$ is created by pasting $T_j$ to $G_1$ at $v_j$ and Condition (ii) holds, so we can again apply Theorem \ref{paste}, this time to conclude that UDP holds on $G_2$.

    For the backward direction, assume UDP holds on $G_2$. For the sake of contradiction, assume Condition (i) does not hold. Then there exists $u \in V(C)$ and $w \in V(T_i)$ such that 
    \begin{align*}    
    \bigcap_{P\in\mathcal{P}_{G_1}(u,w)} \alpha(P) \not = \left(\bigcap_{P\in\mathcal{P}_{C}(u,v_i)} \alpha(P)\right) + \left(\bigcap_{P\in\mathcal{P}_{T_i}(v_i,w)} \alpha(P)\right).
    \end{align*}
    By Theorem \ref{paste}, we can conclude that UDP does not hold on $G_1$. Note that $G_2=G_1 \cup T_j$ with $V(G_1)\cap{V(T_j)}=\{v_j\}$. By Lemma \ref{path}, for any $a, b\in{V(G_1)}$, we have $\mathcal{P}_{G_1}(a, b)=\mathcal{P}_{G_2}(a, b)$. We see that the hypotheses of Theorem \ref{sub} are satisfied, with $G_2$ as the edge-labeled graph and $G_1$ as the subgraph. Since UDP does not hold on $G_1$, the contrapositive of Theorem \ref{sub} tells us that UDP does not hold on $G_2$. This is the desired contradiction, and we conclude that Condition (i) must hold.  Again, since $G_1$ is created by pasting $C$ and $T_i$ at $v_i$ and Condition (i) holds, we can apply Theorem \ref{paste} to conclude that UDP holds on $G_1$.

    It remains to show that Condition (ii) must hold. Notice that $G_2$ is formed by pasting together the graphs $G_1$ and $T_j$ at $v_j$, and $G_1$ and $T_j$ both satisfy UDP. Since UDP holds on $G_2$, Theorem \ref{paste} implies that Condition (ii) must hold.

\end{proof}

\begin{ex}
Let $I, J, K, L$ be ideals of a ring $R$. Let $C$ be a $4$-cycle with vertices $v_1, v_2, v_3, v_4$.  Let $u$ be a vertex distinct from all $v_i$ and $T_2$ be the tree consisting of the single edge $uv_2$.  Let $w$ be a vertex distinct from all $v_i$ and also from $u$, and let $T_4$ be the tree consisting of the single edge $wv_4$.  Let $G_1$ be formed by pasting $T_2$ to $C$ at $v_2$ and $G_2$ be formed by pasting $T_4$ to $G_1$ at $v_4$. Consider the edge labeling on $G_2$ given in Figure \ref{unicycleUDP}. By inspection, we see that the relevant equalities corresponding to Conditions (i) and (ii) in Theorem \ref{cycle2tree} hold.  Thus we can apply Theorem \ref{cycle2tree} to conclude that UDP holds on $G_2$.
\end{ex}

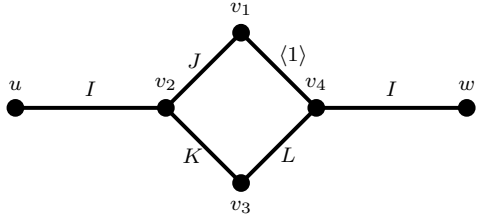
\begin{figure}[h! bt]
\centering
\begin{tikzpicture}
    \Vertex[size=0.2, color=black, label=$v_1$, position=above]{1}
    \Vertex[size=0.2, color=black, x=-1, y=-1, label=$v_2$, position=above]{2}
    \Vertex[size=0.2, color=black, y=-2, label=$v_3$, position=below]{3}
    \Vertex[size=0.2, color=black, x=1, y=-1, label=$v_4$, position=above]{4}
    \Vertex[size=0.2, color=black, x=-3, y=-1, label=$u$, position=above]{5}
    \Vertex[size=0.2, color=black, x=3, y=-1, label=$w$, position=above]{6}
    \Edge[color=black, label=$I$, position={above=1mm}](5)(2);
    \Edge[color=black, label=$J$, position={above left}](1)(2);
    \Edge[color=black, label=$K$, position={below left}](3)(2);
    \Edge[color=black, label=$L$, position={below right}](3)(4);
    \Edge[color=black, label=$I$, position={above=1mm}](4)(6);
    \Edge[color=black, label=$\langle 1 \rangle$, position={above right}](1)(4);
\end{tikzpicture}
\caption{Example of a unicyclic graph satisfying UDP formed by pasting two trees onto a cycle at distinct vertices}
\label{unicycleUDP}
\end{figure}

Theorem \ref{cycle2tree} provides necessary and sufficient conditions for UDP to hold on a unicyclic graph formed by pasting two trees onto a cycle at distinct vertices. The following theorem generalizes this result to show when UDP must hold on a unicyclic graph formed by pasting up to $n$ trees onto a cycle of $n$ vertices.  Since any unicyclic graph can be formed in this way, Theorem \ref{thm: full unicyclic} provides the answer to Question \ref{question: general unicyclic} for any edge-labeled unicyclic graph.
\begin{thm}\label{thm: full unicyclic}
    Let $C$ be a cycle on n vertices $v_1, v_2,...,v_n$. For each 1 $\leq i \leq n$, let $T_i$ be a tree, possibly consisting of a single vertex. Let $G_1$ be the graph formed by pasting $T_1$ to $C$ at the vertex $v_1$. For each 2 $\leq i \leq n$, let $G_i$ be the graph formed by pasting $T_i$ to $G_{i-1}$ at the vertex $v_i$. Then UDP holds on $G_n$ if and only if both of the following conditions hold:
\begin{enumerate}[(i)]
    \item[(i)] UDP holds on $G_{j}$ for all $ 1 \leq j \leq n-1$
    \item[(ii)] for every $u\in V(G_{n-1})$ and $w\in V(T_n)$, $\bigcap\limits_{P\in\mathcal{P}(u,w)}\alpha(P)=\left(\bigcap\limits_{P\in\mathcal{P}(u,v_n)}\alpha(P)\right) + \left(\bigcap\limits_{P\in\mathcal{P}(v_n,w)}\alpha(P)\right)$
\end{enumerate}
\end{thm}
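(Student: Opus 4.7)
The plan is to bootstrap from the two-graph pasting result of Theorem \ref{paste} together with Lemma \ref{path} and Theorem \ref{sub}, exploiting the recursive structure of the construction: $G_n = G_{n-1} \cup T_n$ pasted at the single vertex $v_n$, and more generally $G_j = G_{j-1} \cup T_j$ pasted at $v_j$ for every $2 \le j \le n$. Since each $T_j$ is a tree, Theorem \ref{UPDTreeThm} tells us UDP automatically holds on every $T_j$ for free, so the only nontrivial UDP hypothesis in any application of Theorem \ref{paste} is UDP on the ``left-hand'' graph $G_{j-1}$.

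For the backward direction, suppose (i) and (ii) hold. Then UDP holds on $G_{n-1}$ by (i), UDP holds on $T_n$ because it is a tree, and $V(G_{n-1}) \cap V(T_n) = \{v_n\}$. Condition (ii) is exactly equation (\ref{eq:key pasted graphs equation}) for the pasted graph $G_n$, so Theorem \ref{paste} yields UDP on $G_n$ immediately.

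For the forward direction, assume UDP holds on $G_n$. I would first establish (i) by a downward induction on $j$. For the base case $j = n-1$, Lemma \ref{path} applied to $G_n = G_{n-1} \cup T_n$ gives $\mathcal{P}_{G_{n-1}}(a,b) = \mathcal{P}_{G_n}(a,b)$ for all $a,b \in V(G_{n-1})$, so the hypothesis of Theorem \ref{sub} is met and UDP transfers to $G_{n-1}$. The inductive step is identical: each $G_{j-1}$ sits inside $G_j$ as a pasted summand at the single vertex $v_j$, so Lemma \ref{path} plus Theorem \ref{sub} propagates UDP one step further down. This produces UDP on $G_j$ for all $1 \le j \le n-1$, which is condition (i). Once (i) is in hand, condition (ii) follows by invoking Theorem \ref{paste} in its ``only if'' direction: $G_{n-1}$ and $T_n$ both satisfy UDP, they are pasted at $v_n$ to form $G_n$, and $G_n$ satisfies UDP by hypothesis, so the equation in (ii) must hold.

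The proof is essentially a plumbing exercise with no new computation. The only point requiring attention is making sure the downward induction is organized so that at each stage we are genuinely in the ``pasted at a single vertex'' situation required by Lemma \ref{path} and Theorem \ref{paste}; this is guaranteed by the construction, since each $T_j$ is attached to $G_{j-1}$ solely at the cycle vertex $v_j \in V(C) \subseteq V(G_{j-1})$. No subtler obstacle arises.
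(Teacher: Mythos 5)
Your proposal is correct and follows essentially the same route as the paper: Theorem \ref{paste} handles the backward direction and condition (ii), while Theorem \ref{sub} transfers UDP from $G_n$ down to each $G_j$ for condition (i). The only difference is presentational—you organize the subgraph step as an explicit downward induction invoking Lemma \ref{path} at each pasting, whereas the paper asserts $\mathcal{P}_{G_j}(u,w)=\mathcal{P}_{G_n}(u,w)$ directly; your version is if anything slightly more careful, since that equality really does require iterating Lemma \ref{path} across the successive pastings.
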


\begin{proof}
    We prove the forward direction first. Suppose UDP holds on $G_n$. We want to show that Conditions (i) and (ii) both hold. To show Condition (i) holds, notice that for all $1\leq j\leq n-1$, $G_j$ is a subgraph of $G_n$ and for every $u,w \in V(G_j)$ we have $\mathcal{P}_{G_j}(u,w)=\mathcal{P}_{G_n}(u,w)$. Since UDP holds on $G_n$, by Theorem \ref{sub}, UDP must also hold on $G_j$. We will use Theorem \ref{paste} to show that Condition (ii) holds as well. Note that $G_n = G_{n-1} \cup T_n$, where $G_{n-1}$ and $T_n$ both satisfy UDP, $V(G_{n-1})\cap V(T_n) = \{v_n\}$, and $G_n$ is created by pasting $G_{n-1}$ and $T_n$ at the vertex $v_n$.  Since UDP holds on $G_n$, Theorem \ref{paste} tells us that for every $u\in V(G_{n-1})$ and $w\in V(T_n)$, $\bigcap_{P\in\mathcal{P}(u,w)}\alpha(P)=\left(\bigcap_{P\in\mathcal{P}(u,v_n)}\alpha(P)\right) + \left(\bigcap_{P\in\mathcal{P}(v_n,w)}\alpha(P)\right)$, and this is Condition (ii).
    
    Now suppose Conditions (i) and (ii) both hold.  Then $G_{n-1}$ and $T_n$ both satisfy UDP.  Also, $G_n=G_{n-1}\cup T_n$, where $V(G_{n-1})\cap V(T_n)=\{v_n\}$ and $G_n$ is created by pasting together the graphs $G_{n-1}$ and $T_n$ at the vertex $v_n$.  Because Condition (ii) holds, we can use Theorem \ref{paste} to conclude that UDP holds on $G_n$.
    
\end{proof}


\section{Graph Subdivisions and Multigraphs}
\label{extensions}

Consider a graph formed by pasting together at a single vertex two edge-labeled graphs each satisfying UDP. Conditions for when UDP is satisfied on such a pasted graph, like the condition given in Theorem \ref{paste}, were explored in \cite{UDP}.  Figures $7$ and $8$ in \cite{UDP} contain examples of such pasted graphs that do not satisfy UDP.  A natural next step is to consider pasting graphs together along an edge rather than at a vertex.  Specifically, given two graphs $G_1$ and $G_2$ satisfying UDP and pasting $G_1$ and $G_2$ along an edge to create the graph $G$, must $G$ satisfy UDP?  If not, how can the edges be labeled such that $G$ does not satisfy UDP? 

One of the simplest examples of such an edge-pasted graph is the diamond graph because a diamond graph can be viewed as two 3-cycles pasted along a common edge. An example of an edge-labeled diamond graph is given in Figure \ref{DG}(c). To construct an edge-labeling on the diamond graph such that UDP is not satisfied, we  define the subdivision of an edge-labeled graph and show that if an edge-labeled graph does not satisfy UDP, then no subdivision of the graph can satisfy UDP. Using this idea of subdivision in conjunction with known results on the set of splines on multigraphs, we present an edge-labeling on the diamond graph such that UDP does not hold on the edge-labeled diamond.

We begin with two definitions from \cite{West text}.
\begin{definition}\label{def:West subdivision of edge}\cite[Definition 4.2.5]{West text}
In a graph $G$, \textit{subdivision} of an edge $uv$ is the operation of replacing $uv$ with a path $u,w,v$ through a new vertex $w$.
\end{definition}

\begin{definition}\label{def:West subdivision of graph}\cite[Definition 5.2.19]{West text}
A \textit{$G$-subdivision} or \textit{subdivision of $G$} is a graph obtained from a graph $G$ by successive edge subdivisions.  Equivalently, it is a graph obtained from $G$ by replacing edges with pairwise internally disjoint paths.
\end{definition}

Figure \ref{fig:basic subdivision example} contains an example of a graph $G$ on four vertices and a subdivision of $G$ obtained from three edge subdivisions.  Notice that the subdivision of $G$ contains three additional vertices.

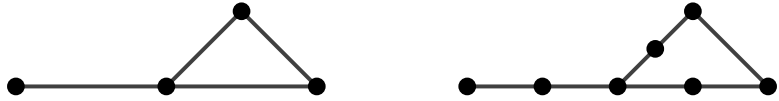
\begin{figure}[h! bt]
    \centering
    \begin{tikzpicture}
    \Vertex[size=0.2, color=black]{1}
    \Vertex[size=0.2, color=black, x=2]{2}
    \Vertex[size=0.2, color=black, x=4]{3}
    \Vertex[size=0.2, color=black, x=3, y=1]{4}
    \Edge(1)(2)
    \Edge(2)(3)
    \Edge(2)(4)
    \Edge(3)(4)
    \Vertex[size=0.2, color=black, x=6]{5}
    \Vertex[size=0.2, color=black, x=8]{6}
    \Vertex[size=0.2, color=black, x=10]{7}
    \Vertex[size=0.2, color=black, x=9, y=1]{8}
    \Vertex[size=0.2, color=black, x=9, fontcolor=black, position=above]{9}
    \Vertex[size=0.2, color=black, x=7, fontcolor=black, position=above]{10}
    \Vertex[size=0.2, color=black, x=8.5, y=0.5, fontcolor=black, position={above left}]{11}
    \Edge(5)(10)
    \Edge(10)(6)
    \Edge(6)(9)
    \Edge(9)(7)
    \Edge(6)(11)
    \Edge(11)(8)
    \Edge(7)(8)
    \end{tikzpicture}
    \caption{A graph $G$ and a subdivision of $G$ resulting from $3$ edges subdivisions}
    \label{fig:basic subdivision example}
\end{figure}

In this next definition, we extend the notion of subdivision of a graph to subdivision of an edge-labeled graph.

\begin{definition}\label{def:subdivision of edge-labeled graph}
A \textit{subdivision of an edge-labeled graph} $(G, \alpha)$ over a ring $R$ is an edge-labeled graph $(G', \alpha')$, where $G'$ is a subdivision of $G$ and $\alpha':E(G')\rightarrow{R}$ is an edge-labeling on $G'$ such that for each edge $e'\in E(G')$, 
\[\alpha'(e')=\begin{cases}
\alpha(e') & \text{if } e'\in E(G) \\
\alpha(e) & \text{if } e'\not\in E(G) \text{ and $e$ is the edge in $G$ that was subdivided to create the edge $e'$ in $G'$}.\\
\end{cases} 
\]  
\end{definition}

\begin{ex}

Consider the edge-labeled graphs given in Figure \ref{fig:extended_graph}, where $I_1, I_2, I_3,I_4$ are ideals of an arbitrary ring. The edge-labeled graph $G'$ on the right is a subdivision of the graph $G$ on the left. The edge-labeling on $G'$ is as described in Definition \ref{def:subdivision of edge-labeled graph}. Specifically, the only edge in $G'$ that was also an edge in $G$ is the edge with label $I_4$, and it has the same label in $G'$ as it does in $G$.  Every other edge in $G'$ was created by subdividing an edge in $G$ and has the same label as the edge that was subdivided to create the edge in $G'$.   
\end{ex}

\begin{figure}[h! bt]
    \centering
    \begin{tikzpicture}
    \Vertex[size=0.2, color=black]{1}
    \Vertex[size=0.2, color=black, x=2]{2}
    \Vertex[size=0.2, color=black, x=4]{3}
    \Vertex[size=0.2, color=black, x=3, y=1]{4}
    \Edge[label=$I_1$, position=below](1)(2)
    \Edge[label=$I_2$, position=below](2)(3)
    \Edge[label=$I_3$, position={above left}](2)(4)
    \Edge[label=$I_4$, position={above right}](3)(4)
    \Vertex[size=0.2, color=black, x=6]{5}
    \Vertex[size=0.2, color=black, x=8]{6}
    \Vertex[size=0.2, color=black, x=10]{7}
    \Vertex[size=0.2, color=black, x=9, y=1]{8}
    \Vertex[size=0.2, color=black, x=9, fontcolor=black, position=above]{9}
    \Vertex[size=0.2, color=black, x=7, fontcolor=black, position=above]{10}
    \Vertex[size=0.2, color=black, x=8.5, y=0.5, fontcolor=black, position={above left}]{11}
    \Edge[label=$I_1$, position=below](5)(10)
    \Edge[label=$I_1$, position=below](10)(6)
    \Edge[label=$I_2$, position=below](6)(9)
    \Edge[label=$I_2$, position=below](9)(7)
    \Edge[label=$I_3$, position={above left}](6)(11)
    \Edge[label=$I_3$, position={above left}](11)(8)
    \Edge[label=$I_4$, position={above right}](7)(8)
    \end{tikzpicture}
    \caption{Edge-labeled graph and an expansion}
    \label{fig:extended_graph}
\end{figure}
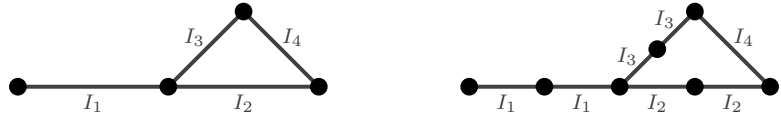

\begin{lem}\label{lem:paths with matching sums}
Let $(G,\alpha)$ be an edge-labeled graph and $(G',\alpha')$ be a subdivision of $(G,\alpha)$.  For any $uv$-path $P$ in $G$, there exists a $uv$-path $P'$ in $G'$ such that $\alpha'(P')=\alpha(P)$.  Also, for any $uv$-path $P'$ in $G'$, there exists a $uv$-path $P$ in $G$ such that $\alpha(P)=\alpha'(P')$.
\end{lem}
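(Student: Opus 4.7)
The plan is to exploit the fact that every edge $e$ of $G$ is replaced in $G'$ by an internally-disjoint path, call it $\pi(e)$, all of whose edges carry the ideal $\alpha(e)$ under $\alpha'$. Since $I+I=I$ for any ideal $I$, the sum of labels along $\pi(e)$ collapses to $\alpha(e)$; that is, $\alpha'(\pi(e))=\alpha(e)$. This single observation drives both directions of the correspondence.

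For the forward direction, given a $uv$-path $P$ in $G$ with consecutive edges $e_1,e_2,\ldots,e_k$, I form $P'$ by concatenating $\pi(e_1),\pi(e_2),\ldots,\pi(e_k)$ in $G'$. The result is a walk from $u$ to $v$; it is actually a path because the interiors of distinct $\pi(e_j)$ are pairwise vertex-disjoint by definition of a subdivision, those interior vertices lie in $V(G')\setminus V(G)$ (so they cannot coincide with any original endpoint), and the original vertices traversed by $P$ are all distinct by assumption. Then
\[
\alpha'(P') \;=\; \sum_{j=1}^{k}\alpha'(\pi(e_j)) \;=\; \sum_{j=1}^{k}\alpha(e_j) \;=\; \alpha(P).
\]

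For the backward direction, given a $uv$-path $P'$ in $G'$ with $u,v\in V(G)$, the key structural observation is that every vertex of $V(G')\setminus V(G)$ has degree exactly $2$ in $G'$, with both of its incident edges lying inside a single $\pi(e)$. Consequently, whenever $P'$ enters a subdivided segment $\pi(e)$ at one endpoint of $e$, it is forced to traverse $\pi(e)$ entirely and exit at the other endpoint. This lets me decompose $P'$ canonically as $\pi(e_1)\pi(e_2)\cdots\pi(e_k)$, and reading off the sequence of original vertices yields a walk $P$ in $G$ from $u$ to $v$ whose edges are $e_1,\ldots,e_k$. Because $P'$ repeats no vertex, in particular it repeats no vertex of $V(G)$, so $P$ is a path. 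The same ideal-sum computation gives $\alpha(P)=\alpha'(P')$.

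The main obstacle is justifying the degree-$2$ claim and the well-definedness of $\pi(e)$, since Definition \ref{def:West subdivision of graph} permits iteratively subdividing edges (including newly created ones). I would handle this by induction on the number of successive edge subdivisions used to produce $G'$ from $G$: the base case is immediate, and in the inductive step a single edge subdivision both preserves the correspondence $\pi$ for all previously-tracked edges and creates one new vertex of degree $2$ sitting inside exactly one $\pi(e)$. This bookkeeping is routine but is the structural backbone on which the rest of the argument rests.
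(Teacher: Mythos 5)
Your proposal is correct and follows essentially the same route as the paper: both set up the natural correspondence between a path in $G$ and the path in $G'$ obtained by replacing each edge with its subdivided segment (and vice versa), then observe that the two paths carry the same set of edge labels, so idempotency of ideal addition gives $\alpha'(P')=\alpha(P)$. The only difference is that you explicitly verify the details the paper leaves implicit (that the concatenation is a path, that a path in $G'$ must traverse each subdivided segment in full because interior vertices have degree $2$, and the induction over successive subdivisions), which is sound but not a different argument.
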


\begin{proof}
Let $(G,\alpha)$ be an edge-labeled graph and $(G',\alpha')$ be a subdivision of $(G,\alpha)$.  Let $P$ be an arbitrary $uv$-path in $G$ and $P'$ be the $uv$-path in $G'$ such that every edge in $P'$ is either an edge in $P$ or was created by subdividing an edge in $P$.  Then every edge label in $P'$ is an edge label in $P$ and vice versa.  Thus $\alpha'(P')=\alpha(P)$.

It remains to show that every $uv$-path $P'$ in $G'$ has a corresponding $uv$-path $P$ in $G$ such that $\alpha(P)=\alpha'(P')$.  Let $P'$ be an arbitrary path in $G'$ from $u$ to $v$ and $P$ be the $uv$-path in $G$ such that every edge in $P$ is either an edge in $P'$ or was subdivided to create an edge in $P'$.  Then every edge label in $P$ is an edge label in $P'$ and vice versa.  Thus $\alpha(P)=\alpha'(P')$.

\end{proof}

\begin{lem}\label{intersection_paths}

Let $(G,\alpha)$ be an edge-labeled graph and $(G',\alpha')$ be a subdivision of $(G,\alpha)$.  For any $u,v\in V(G)$, it must be that \[\bigcap\limits_{P\in\mathcal{P}_G(u,v)}\alpha(P)=\bigcap\limits_{P'\in\mathcal{P}_{G'}(u,v)}\alpha'(P').\]

\end{lem}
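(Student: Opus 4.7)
The plan is to prove the equality of the two intersections by showing double inclusion, leveraging the bijective-style correspondence between paths already established in Lemma \ref{lem:paths with matching sums}. Since $u,v \in V(G) \subseteq V(G')$, both sides of the claimed equality are well-defined.

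First, I would show $\bigcap_{P \in \mathcal{P}_G(u,v)} \alpha(P) \subseteq \bigcap_{P' \in \mathcal{P}_{G'}(u,v)} \alpha'(P')$. Take an arbitrary $x$ in the left-hand side. To show $x$ lies in the right-hand side, let $P'$ be any $uv$-path in $G'$. By the second half of Lemma \ref{lem:paths with matching sums}, there is a $uv$-path $P$ in $G$ with $\alpha(P) = \alpha'(P')$. By assumption $x \in \alpha(P)$, hence $x \in \alpha'(P')$. Since $P'$ was arbitrary, $x$ lies in the intersection on the right.

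For the reverse inclusion, I would proceed symmetrically. Take $x \in \bigcap_{P' \in \mathcal{P}_{G'}(u,v)} \alpha'(P')$ and let $P$ be any $uv$-path in $G$. By the first half of Lemma \ref{lem:paths with matching sums}, there is a $uv$-path $P'$ in $G'$ (the one obtained by subdividing the relevant edges) with $\alpha'(P') = \alpha(P)$. Thus $x \in \alpha'(P') = \alpha(P)$, and since $P$ was arbitrary, $x$ lies in the left-hand intersection.

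There is no real obstacle here — this is essentially a direct unpacking of Lemma \ref{lem:paths with matching sums}. The only mild subtlety is being careful that ``the $uv$-path in $G'$ obtained from $P$'' and ``the $uv$-path in $G$ obtained from $P'$'' are genuine paths with identical label-sums, which is exactly the content of the preceding lemma, so nothing new needs to be verified.
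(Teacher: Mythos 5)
Your proof is correct and follows essentially the same route as the paper: a double-inclusion argument in which each arbitrary path on one side is matched, via Lemma \ref{lem:paths with matching sums}, to a path on the other side with the same label-sum. No gaps.
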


\begin{proof} 
Let $u,v\in V(G)$ and $a$ be an element of $\bigcap\limits_{P\in\mathcal{P}_G(u,v)}\alpha(P)$.  Let $Q'$ be an arbitrary path from $u$ to $v$ in the subdivision $G'$.  We want to show that $a$ must belong to $\alpha'(Q')$.  By Lemma \ref{lem:paths with matching sums}, we know there exists a path $Q$ from $u$ to $v$ in $G$ such that $\alpha(Q)=\alpha'(Q')$.  Since $a$ belongs to $\alpha(P)$ for every path $P$ from $u$ to $v$ in $G$, we know $a\in \alpha(Q)$.  Thus $a\in\alpha'(Q')$.

Now let $b$ be an element of $\bigcap\limits_{P'\in\mathcal{P}_{G'}(u,v)}\alpha'(P')$.  Let $S$ be an arbitrary path from $u$ to $v$ in $G$.  We want to show that $b$ must belong to $\alpha(S)$.  By Lemma \ref{lem:paths with matching sums}, we know there exists a path $S'$ from $u$ to $v$ in $G'$ such that $\alpha'(S')=\alpha(S)$.  Since $b$ belongs to $\alpha'(P')$ for every path $P'$ from $u$ to $v$ in $G'$, we know $b\in \alpha'(S')$.  Thus $b\in\alpha(S)$.

\end{proof}

In \cite{UDP}, Alt{\i}nok et al. only considered UDP as a property of edge-labeled simple graphs, and until now we have restricted our attention to simple graphs as well.  In Theorem \ref{thm:UDP on diamond graph}, we will show that UDP does not have to hold on a diamond graph, and in the proof of that theorem, we will consider an edge-labeled multigraph.  Note that the definition of UDP as it is written in Definition \ref{def:UDP} can be extended to define UDP on an edge-labeled multigraph.  The definitions for subdivision of an edge, of a graph, and of an edge-labeled graph given in Definitions \ref{def:West subdivision of edge}, \ref{def:West subdivision of graph}, and \ref{def:subdivision of edge-labeled graph} also extend as written for edge-labeled multigraphs.  Lastly, note that Lemma \ref{lem:paths with matching sums} and Lemma \ref{intersection_paths} and their proofs hold as written for edge-labeled multigraphs.  Our next result, Theorem \ref{extension} holds whether the graph $(G,\alpha)$ and its subdivision $(G',\alpha')$ are simple graphs or multigraphs.  We prove the result for multigraphs, and the simple graphs are just a special case.  We will apply this result to an edge-labeled multigraph in our proof of Theorem \ref{thm:UDP on diamond graph}.

\begin{thm}\label{extension}
If UDP does not hold on an edge-labeled graph, then UDP does not hold on any subdivision of the edge-labeled graph.
\end{thm}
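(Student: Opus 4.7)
The plan is to prove the contrapositive in a direct form: starting from a spline on the subdivision $(G',\alpha')$ that realizes a given difference, restrict it to $V(G)\subseteq V(G')$ to produce a spline on $(G,\alpha)$ with the same difference. So the first step is to assume that UDP fails on $(G,\alpha)$, which gives vertices $u,v\in V(G)$ and an element
\[ x \in \bigcap_{P\in\mathcal{P}_G(u,v)} \alpha(P) \]
such that no spline $\rho$ on $(G,\alpha)$ satisfies $\rho(u)-\rho(v)=x$. I would then fix an arbitrary subdivision $(G',\alpha')$ of $(G,\alpha)$ and note that $u,v\in V(G)\subseteq V(G')$, and that by Lemma \ref{intersection_paths} we have
\[ x\in\bigcap_{P'\in\mathcal{P}_{G'}(u,v)} \alpha'(P'). \]
Thus $x$ is a legitimate target difference for UDP on $(G',\alpha')$.

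Next I would argue by contradiction: suppose UDP holds on $(G',\alpha')$. Then there is a spline $\rho'\colon V(G')\to R$ with $\rho'(u)-\rho'(v)=x$. Define $\rho\colon V(G)\to R$ by $\rho(w)=\rho'(w)$ for all $w\in V(G)$. The main task is to check that $\rho$ is a spline on $(G,\alpha)$, which will yield the desired contradiction since then $\rho(u)-\rho(v)=x$.

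To verify the spline condition, take any edge $e=ab\in E(G)$. Because $G'$ is obtained from $G$ by successive edge subdivisions, $e$ is replaced in $G'$ by a path $a=w_0,w_1,\dots,w_k=b$ whose internal vertices are new vertices not in $V(G)$ (with $k=1$ exactly when $e$ was not subdivided). By Definition \ref{def:subdivision of edge-labeled graph}, every edge $w_{i-1}w_i$ of this replacement path carries the label $\alpha(e)$ under $\alpha'$. Using the telescoping identity
\[ \rho(a)-\rho(b)=\rho'(w_0)-\rho'(w_k)=\sum_{i=1}^{k}\bigl(\rho'(w_{i-1})-\rho'(w_i)\bigr), \]
and since each summand lies in $\alpha'(w_{i-1}w_i)=\alpha(e)$ by the spline property of $\rho'$, closure of the ideal $\alpha(e)$ under addition gives $\rho(a)-\rho(b)\in\alpha(e)$, as required.

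The proof therefore reduces to the short combinatorial observation that each original edge of $G$ is replaced by a path of uniformly labeled edges in $G'$, at which point the telescoping argument does the work. The main (and only) subtlety I foresee is being precise about this replacement path structure under iterated subdivisions and invoking Definition \ref{def:subdivision of edge-labeled graph} to guarantee the uniform labeling, but once that is in place the contradiction — and hence the theorem — is immediate, and the argument applies equally well to simple graphs and multigraphs as remarked just before the statement.
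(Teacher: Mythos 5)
Your proposal is correct and matches the paper's own proof essentially step for step: both argue by contradiction, use Lemma \ref{intersection_paths} to transfer $x$ to the subdivision, restrict the spline $\rho'$ to $V(G)$, and verify the spline condition on each original edge via the telescoping sum over the uniformly labeled replacement path. No substantive differences to report.
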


\begin{proof}

    Let $(G, \alpha)$ be a graph that does not satisfy UDP and $(G', \alpha')$ be a subdivision of $(G, \alpha)$. Because ($G, \alpha$) does not satisfy UDP, there exist $u, v\in{V(G)}$ and some $x\in\bigcap_{P\in\mathcal{P}_G(u,v)}\alpha(P)$ such that there exists no spline $\rho$ on $(G, \alpha)$ for which $\rho(u)-\rho(v)=x$. For the sake of contradiction, suppose that UDP does hold on $(G', \alpha')$. By Lemma \ref{intersection_paths}, $\bigcap_{P\in\mathcal{P}_G(u,v)}\alpha(P)=\bigcap_{P'\in\mathcal{P}_{G'}(u,v)}\alpha'(P')$, so we see that $x\in\bigcap_{P'\in\mathcal{P}_{G'(u,v)}}\alpha'(P')$. Since UDP holds on $(G', \alpha')$, there exists a spline $\rho'$ on ($G', \alpha'$) such that $\rho'(u)-\rho'(v)=x$. 
    
    Consider arbitrary adjacent vertices $z_1, z_2\in{V(G)}$.  Let $e$ be an edge between $z_1$ and $z_2$ in $G$ and let $I=\alpha(e)$.  If $e$ is not one of the edges subdivided in creating $G'$, then $z_1$ and $z_2$ are adjacent in $G'$, $e$ is an edge in $G'$, and $\alpha'(e)=I$.  Since $\rho'$ is a spline on $(G',\alpha')$, we know $\rho'(z_1)-\rho'(z_2)\in I$.

    Now suppose $e$ is one of the edges subdivided in creating $G'$.  Let $z_1', z_2', ..., z_k'$ be the internal vertices on the path from $z_1$ to $z_2$ in $G'$ that was created in subdividing $e$. Then $\langle{z_1, z_1', z_2', ..., z_k', z_2}\rangle$ is a path from $z_1$ to $z_2$ in $G'$, and all edges in this path are labeled $I$. Since $\rho'$ is a spline on ($G', \alpha'$), $\rho'(z_1)-\rho'(z_1')\in{I}$, $\rho'(z_i')-\rho'(z_{i+1}')\in{I}$, for any $1\leq{i}\leq{k-1}$, and $\rho'(z_k')-\rho'(z_2)\in{I}$. Thus \[\rho'(z_1)-\rho'(z_2)=\rho'(z_1)-\rho'(z_1')+\rho'(z_1')-\rho'(z_2')+\rho'(z_2')-\cdot\cdot\cdot-\rho'(z_k')+\rho'(z_k')-\rho'(z_2)\in{I}.\] 
    
    In both cases, we have $\rho'(z_1)-\rho'(z_2)\in I=\alpha(e)$.  Because this holds for any pair of adjacent vertices $z_1, z_2$ in $G$, defining $\rho$ on ($G, \alpha$) such that $\rho(w)=\rho'(w)$ for all $w\in{V(G)}$ creates a spline on ($G, \alpha$). Additionally, for this spline $\rho$, we have $\rho(u)-\rho(v)=\rho'(u)-\rho'(v)=x$, a contradiction. Thus, ($G', \alpha'$) cannot satisfy UDP.
    
\end{proof}

The following theorem, proven by Anders, Crans, Foster-Greenwood, Mellor, and Tymoczko in \cite{Anders_2020}, gives a process for reducing an edge-labeled multigraph to an edge-labeled simple graph and states that this process does not change the set of splines.  We will also use this theorem in our proof of Theorem \ref{thm:UDP on diamond graph}.

\begin{thm}\label{multigraph}\cite[Lemma 2.2]{Anders_2020}
Suppose $(G, \alpha)$ is a multigraph. Let $u, v\in{V(G)}$ be vertices such that there exist two edges $e_1$ and $e_2$ between $u$ and $v$. Let $(H, \alpha')$ be the graph obtained from $(G, \alpha)$ by replacing $e_1$ and $e_2$ with a single edge $e$ and defining the edge labeling $\alpha'$ by $\alpha'(e)=\alpha(e_1)\cap\alpha(e_2)$ and for any other edge $\Bar{e}$ in $H$, $\alpha'(\Bar{e})=\alpha(\Bar{e})$. Then the set of all splines on $(G, \alpha)$ is equal to the set of all splines on $(H, \alpha')$.
    
\end{thm}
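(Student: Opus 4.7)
The plan is to prove the equality of spline sets by mutual containment, leveraging the elementary fact that for ideals $I, J$ in $R$, an element lies in $I \cap J$ if and only if it lies in both $I$ and $J$. Since $V(G) = V(H)$, any function $\rho : V(G) \to R$ is simultaneously a candidate for being a spline on $(G,\alpha)$ and on $(H,\alpha')$, so the question reduces entirely to comparing the edge-wise difference conditions. The only edge whose surrounding structure differs between the two graphs is the pair $e_1, e_2$ in $G$, which is replaced by the single edge $e$ in $H$; every other edge $\bar{e}$ appears identically in both graphs with $\alpha(\bar{e}) = \alpha'(\bar{e})$, so the spline condition on such edges transfers automatically.

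For the forward containment, I would take a spline $\rho$ on $(G,\alpha)$ and check that it is a spline on $(H,\alpha')$. The only nontrivial edge to verify is $e$. Because $\rho$ satisfies the spline condition on the parallel edges $e_1$ and $e_2$ of $G$, we have $\rho(u) - \rho(v) \in \alpha(e_1)$ and $\rho(u) - \rho(v) \in \alpha(e_2)$, hence $\rho(u) - \rho(v) \in \alpha(e_1) \cap \alpha(e_2) = \alpha'(e)$, as required. For the reverse containment, I would take a spline $\rho$ on $(H,\alpha')$ and check it against $(G,\alpha)$. The only edges of $G$ not already shared with $H$ are $e_1$ and $e_2$; for these, since $\rho(u) - \rho(v) \in \alpha'(e) = \alpha(e_1) \cap \alpha(e_2) \subseteq \alpha(e_i)$ for $i = 1,2$, both spline conditions hold.

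There is no substantive obstacle here: the whole argument is the set-theoretic tautology $x \in I$ and $x \in J$ if and only if $x \in I \cap J$, applied to the edge-wise difference $\rho(u) - \rho(v)$. The only modest care needed is to check that the labels on edges untouched by the merging are genuinely preserved, which is built directly into the definition of $\alpha'$. If desired, one could remark that the same argument immediately generalizes to any finite collection of parallel edges between $u$ and $v$, by replacing them with a single edge labeled by the intersection of all their labels.
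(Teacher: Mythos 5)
Your proof is correct. The paper does not prove this statement itself but cites it as Lemma 2.2 of Anders, Crans, Foster-Greenwood, Mellor, and Tymoczko; your argument --- that since $V(G)=V(H)$ and all other edge labels are unchanged, a function $\rho$ satisfies the conditions on both parallel edges if and only if $\rho(u)-\rho(v)\in\alpha(e_1)\cap\alpha(e_2)=\alpha'(e)$ --- is exactly the standard intersection argument used in that source.
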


In the case where $(G,\alpha)$ is an edge-labeled multigraph, we will refer to the corresponding edge-labeled simple graph $(H, \alpha')$ from Theorem \ref{multigraph} as the \textit{reduction} of $(G,\alpha)$ to a simple graph.

Using Theorems \ref{extension} and \ref{multigraph}, we now exhibit an example of an edge-labeled diamond graph on which UDP is not satisfied.


\begin{thm}\label{thm:UDP on diamond graph}

Over any ring $R$ with ideals $I,J,K$ for which $(I + K) \cap (J + K) \neq (I \cap J)+K$, there exists an edge-labeling on the diamond graph such that UDP does not hold. 

\end{thm}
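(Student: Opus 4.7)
The plan is to exhibit an edge-labeled multigraph $(M,\beta)$ that fails UDP and has the diamond graph as a subdivision, then invoke Theorem~\ref{extension}.

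Let $M$ be the multigraph on vertices $\{u,v,w\}$ with two parallel edges $e_1,e_2$ joining $u$ and $v$, together with single edges $uw$ and $vw$. Define $\beta$ by $\beta(e_1)=I$, $\beta(e_2)=J$, $\beta(uw)=\langle 1 \rangle$, and $\beta(vw)=K$. Subdividing $e_2$ by inserting a new vertex $x$ produces a simple graph on $\{u,v,w,x\}$ with edges $uv, ux, xv, uw, vw$; here $u$ and $v$ have degree $3$, while $w$ and $x$ each have degree $2$ and are non-adjacent, so this graph is the diamond, and the induced edge-labeling assigns $I$ to $uv$, $J$ to each of $ux$ and $xv$, $\langle 1 \rangle$ to $uw$, and $K$ to $vw$.

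Next I would show that UDP fails on $(M,\beta)$ at the pair $(u,w)$. The three $uw$-paths in $M$ are the direct edge $uw$ (path-ideal $\langle 1 \rangle$), the path $u,v,w$ using $e_1$ (path-ideal $I+K$), and the path $u,v,w$ using $e_2$ (path-ideal $J+K$), so the intersection of path-ideals equals $(I+K)\cap(J+K)$. To determine which values of $\rho(u)-\rho(w)$ are actually attained by splines, I would apply Theorem~\ref{multigraph} to collapse $e_1$ and $e_2$ into a single edge labeled $I\cap J$. The reduction is a triangle on $\{u,v,w\}$ with edge-labels $I\cap J$, $\langle 1 \rangle$, and $K$, which satisfies UDP by Theorem~\ref{UDPCycleThm}. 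Hence the attainable values of $\rho(u)-\rho(w)$ over all splines on $M$ coincide with the triangle's $uw$-path intersection, namely $\langle 1 \rangle\cap((I\cap J)+K)=(I\cap J)+K$.

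By hypothesis there exists $y\in(I+K)\cap(J+K)$ with $y\notin(I\cap J)+K$, so $y$ lies in the $uw$-path intersection of $M$ but cannot be realized as $\rho(u)-\rho(w)$ by any spline on $M$, meaning UDP fails on $(M,\beta)$. Theorem~\ref{extension} then guarantees UDP fails on every subdivision of $(M,\beta)$, and in particular on the edge-labeled diamond obtained by subdividing $e_2$. The main subtlety is choosing $M$ so that its path-intersection realizes $(I+K)\cap(J+K)$ while the multigraph reduction from Theorem~\ref{multigraph} is a triangle; this alignment is precisely what forces the spline-attainable set to collapse to $(I\cap J)+K$ and yields the desired failure of UDP.
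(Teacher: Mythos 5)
Your proof is correct and follows essentially the same strategy as the paper's: build a multigraph on three vertices with parallel edges labeled $I$ and $J$ that fails UDP, establish the failure by combining Theorem~\ref{multigraph} (reduction to a triangle with the merged edge labeled $I\cap J$) with Theorem~\ref{containment}, and then transfer the failure to the diamond via Theorem~\ref{extension}. The only differences are cosmetic --- you label the third edge $\langle 1\rangle$ where the paper uses $I+K$, and you target the vertex pair accordingly --- so the argument matches the paper's in all essentials.
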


\begin{proof}
Let R be a ring with ideals $I, J, K$, for which $(I+K)\cap(J+K)\neq{(I\cap{J})+K}$. Note that $(I \cap J)+K \subseteq (I+K) \cap (J + K)$, so it must be that $(I + K) \cap (J + K) \nsubseteq  (I \cap J)+K$. Let $x\in (I+K)\cap(J+K)$ such that $x\not\in(I\cap J)+K$.  Consider the three edge-labeled graphs MDG, NMDG, and DG shown in Figure \ref{DG}. Assume for the sake of contradiction that UDP holds on the graph MDG in Figure \ref{DG}(a).  In this multigraph, there are three distinct paths from $u$ to $v$, and 
\[
\bigcap_{P\in\mathcal{P}_{MDG}(u,v)}\alpha(P)=(I+K)\cap(I+K)\cap(J+K)=(I+K)\cap(J+K).
\]
Since UDP holds on MDG and $x\in (I+K)\cap(J+K)$, we know there exists a spline $\rho$ on MDG such that $\rho(u)-\rho(v)=x$.  Note that the graph NMDG in Figure \ref{DG}(b) is the reduction of MDG to a simple graph.  By Theorem \ref{multigraph}, $\rho$ must also be a spline on NMDG.  Now we can apply Theorem \ref{containment} to the spline $\rho$ on NMDG to conclude that
\[
x=\rho(u)-\rho(v)\in (I+K)\cap\left((I\cap J)+K)\right)=(I\cap J)+K,
\]
which is a contradiction to our choice of $x$.  Thus UDP does not hold on the graph MDG.  Because the graph DG in Figure \ref{DG}(c) is a subdivision of the edge-labeled multigraph NMDG, we see from Theorem \ref{extension} that UDP does not hold on DG.

\end{proof}

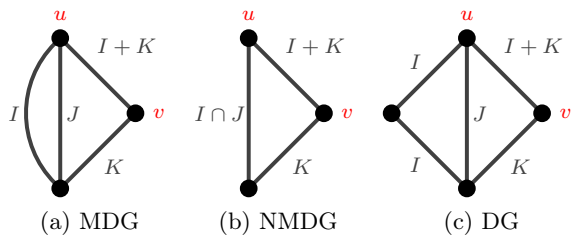
\begin{figure}[h! bt]
    \centering
    \subfloat[MDG]{
    \begin{tikzpicture}
    \Vertex [size=0.2, x=1, y=1, color=black, position=above, label=$u$, fontcolor=red] {1};
    \Vertex [size=0.2, x=2, color=black, position=right, label=$v$, fontcolor=red] {2};
    \Vertex [size=0.2, x=1, y=-1, color=black] {3};
    \Edge [bend=-45, label=$I$, position=left](1)(3)
    \Edge [label=$J$, position=right](1)(3)
    \Edge [label=$I+K$, position={above right=1mm}](1)(2)
    \Edge [label=$K$, position={below right=1mm}](3)(2)
    \end{tikzpicture}
    }
    \subfloat[NMDG]{
    \begin{tikzpicture}
    \Vertex [size=0.2, x=1, y=1, color=black, position=above, label=$u$, fontcolor=red] {1};
    \Vertex [size=0.2, x=2, color=black, position=right, label=$v$, fontcolor=red] {2};
    \Vertex [size=0.2, x=1, y=-1, color=black] {3};
    \Edge [label=$I\cap{J}$, position=left](1)(3)
    \Edge [label=$I+K$, position={above right=1mm}](1)(2)
    \Edge [label=$K$, position={below right=1mm}](3)(2)
    \end{tikzpicture} 
    }
    \subfloat[DG]{
    \begin{tikzpicture}
    \Vertex [size=0.2, x=1, y=1, color=black, position=above, label=$u$, fontcolor=red] {1};
    \Vertex [size=0.2, x=2, color=black, position=right, label=$v$, fontcolor=red] {2};
    \Vertex [size=0.2, x=1, y=-1, color=black] {3};
    \Vertex [size=0.2, x=0, color=black]{4}
    \Edge [label=$J$, position=right](1)(3)
    \Edge [label=$I+K$, position={above right=1mm}](1)(2)
    \Edge [label=$K$, position={below right=1mm}](3)(2)
    \Edge [label=$I$, position={above left=1mm}](4)(1)
    \Edge [label=$I$, position={below left=1mm}](4)(3)
    \end{tikzpicture} 
    }
    \caption{Constructing an edge-labeling on the diamond graph (DG) such that UDP does not hold}
    \label{DG}
\end{figure}


\section{The pairwise edge-disjoint path property and UDP}
\label{ped}

Alt{\i}nok et al. showed that UDP must hold on any edge-labeled cycle or tree over any ring $R$.  In Section \ref{unicyclic}, we showed that UDP does not hold on the edge-labeled unicyclic graph in Figure \ref{unicyclenoUDP}, and then we determined conditions under which UDP must hold on an edge-labeled unicyclic graph.  In Section \ref{extensions}, we shifted from considering graphs pasted together at a single vertex to graphs pasted together along a single edge.  Specifically, we considered the diamond graph, which can be seen as the result of pasting together two $3$-cycles along an edge.  In Theorem \ref{thm:UDP on diamond graph}, we saw an example of an edge-labeled diamond graph that does not satisfy UDP.  After considering these families of graphs, we began looking for a structural property of a graph that would indicate whether the graph must satisfy UDP for every possible edge-labeling over any ring $R$. This investigation led us to a condition we call the  pairwise edge-disjoint path property (PEDPP). We begin this section by defining the PEDPP,  then we present a list of conditions equivalent to the PEDPP, and we end this section by proving that a graph having the PEDPP must satisfy UDP for any edge-labeling over any ring.  In Section \ref{sec: subgraphs}, we will upgrade this result to the biconditional statement that given a graph $G$, UDP holds on $(G,\alpha)$ for any edge-labeling $\alpha$ over any ring $R$ if and only if $G$ satisfies the pairwise edge-disjoint path property.
\begin{definition}
   A connected graph $G$ has the \textit{pairwise edge-disjoint path property}, abbreviated PEDPP, if for any pair of distinct vertices $u$ and $v$ in $G$ and for any pair of distinct $u,v$-paths $P_1$ and $P_2$ in $G$, we have $E(P_1) \cap E(P_2) = \emptyset$.
\end{definition}
\begin{ex}
     Consider the 5-cycle on the right in Figure \ref{fig:PEDPP}.  Between any pair of distinct vertices, there are exactly two paths, and these paths do not share any edges. Hence the 5-cycle has the PEDPP. Now consider the graph on the left in Figure \ref{fig:PEDPP}. Here the $a,b$-paths $\langle a,e,b \rangle$ and $\langle a,e,d,b \rangle$ share the edge $ae$, so the graph does not have the PEDPP.
\end{ex}
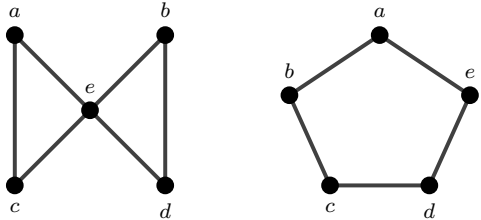
\begin{figure}[h! bt]
    \centering
    \begin{tikzpicture}
        \Vertex[size = 0.2, color = black, label = $e$, position = above]{1}
        \Vertex[size = 0.2, color = black, x = -1, y = 1, label = $a$, position = above]{2}
        \Vertex[size = 0.2, color = black, x = 1, y = 1, label = $b$, position = above]{3}
        \Vertex[size = 0.2, color = black, x = -1, y = -1, label = $c$, position = below]{4}
        \Vertex[size = 0.2, color = black, x = 1, y = -1, label = $d$, position = below]{5}

        \Edge(1)(2)
        \Edge(1)(3)
        \Edge(2)(4)
        \Edge(3)(5)
        \Edge(4)(1)
        \Edge(5)(1)
        
    \end{tikzpicture}
    \hspace{1cm}
    \begin{tikzpicture}
        \Vertex[size = 0.2, color = black, label = $a$, position = above]{1}
        \Vertex[size = 0.2, color = black, x = -1.2, y = -0.8, label = $b$, position = above]{2}
        \Vertex[size = 0.2, color = black, x = -0.66, y = -2, label = $c$, position = below]{3}
        \Vertex[size = 0.2, color = black, x = 0.66, y = -2, label = $d$, position = below]{4}
        \Vertex[size = 0.2, color = black, x = 1.2, y = -0.8, label = $e$, position = above]{5}

        \Edge(1)(2)
        \Edge(2)(3)
        \Edge(3)(4)
        \Edge(4)(5)
        \Edge(5)(1)
        
    \end{tikzpicture}
    \caption{The graph on the right satisfies the PEDPP, while the graph on the left does not.}
    \label{fig:PEDPP}
\end{figure}

We now introduce a basic graph theory definition from \cite{West text}.  This definition will be crucial in developing our description of a choke point in a graph, and then in Theorem \ref{thm:ChokePoint_PEDPP} we will make a connection between whether a graph has a choke point and whether the graph satisfies the PEDPP.  
\begin{definition}\label{def:West star}\cite[Example 2.1.2]{West text}
A \textit{star} is a tree consisting of one vertex adjacent to all the others.  A star $S$ with $|V(S)|=k+1$ is denoted by $S_k$, and we refer to the one vertex that is adjacent to all of the $k$ other vertices as the \textit{internal vertex} of the star.
\end{definition}

Figure \ref{fig:StarExample} contains an illustration of the star $S_3$.
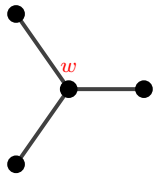
\begin{figure}[h! bt]
    \centering
    \begin{tikzpicture}
        \Vertex[size = 0.2, x = 2.3, y = 1, color = black]{4};
        \Vertex[size = 0.2, x = 2.3, y = -1, color = black]{5};
        \Vertex[size = 0.2, x = 3, color = black, label = $w$, fontcolor=red, position = above]{6};
        \Vertex[size = 0.2, x = 4, color = black]{7};
        \Edge(4)(6);
        \Edge(5)(6);
        \Edge(6)(7);
    \end{tikzpicture}
    \caption{A star $S_3$ with $4$ vertices and $w$ is the internal vertex of the star}
    \label{fig:StarExample}
\end{figure}

\begin{definition}
    Let $G$ be a graph and let $u,v \in V(G)$. Consider the subgraph $F = \bigcup \limits_{P \in \mathcal{P}_{(u,v)}} P$. If there exists a star $S_k \subseteq F$ with $k \geq 3$, then the internal vertex of $S_k$ is called a \textit{choke point} of $u$ and $v$.
\end{definition}
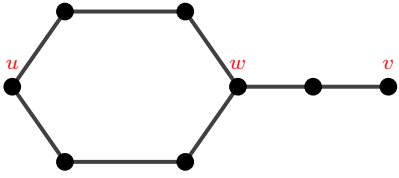
\begin{figure}[h! bt]
    \centering
    \begin{tikzpicture}
        \Vertex[size = 0.2, color = black, label = $u$, fontcolor=red, position = above]{1};
        \Vertex[size = 0.2, x = 0.7, y = 1, color = black]{2};
        \Vertex[size = 0.2, x = 0.7, y = -1, color = black]{3};
        \Vertex[size = 0.2, x = 2.3, y = 1, color = black]{4};
        \Vertex[size = 0.2, x = 2.3, y = -1, color = black]{5};
        \Vertex[size = 0.2, x = 3, color = black, label = $w$, fontcolor=red, position = above]{6};
        \Vertex[size = 0.2, x = 4, color = black]{7};
        \Vertex[size = 0.2, x = 5, color = black, label = $v$, fontcolor=red, position = above]{8};
        \Edge(1)(2);
        \Edge(1)(3);
        \Edge(2)(4);
        \Edge(3)(5);
        \Edge(4)(6);
        \Edge(5)(6);
        \Edge(6)(7);
        \Edge(7)(8);
    \end{tikzpicture}
    \caption{In this graph, the vertex $w$ is a choke point of $u$ and $v$.}
    \label{fig:chokePointExample}
\end{figure}
\begin{ex}
    Consider the graph in Figure \ref{fig:chokePointExample}.  Here the subgraph $F = \bigcup \limits_{P \in \mathcal{P}_{(u,v)}} P$ is the entire graph, and the vertex $w$ is a choke point of the vertices $u$ and $v$. In this graph, the two paths from $u$ to $v$ both pass through the choke point $w$ and share two edges, those on the path from $w$ to $v$.  Consequently, this graph does not satisfy the pairwise edge-disjoint path property.
\end{ex}
We now prove that $G$ having no choke points is equivalent to $G$ satisfying the PEDPP and also to $G$ being either a tree or cycle.
\begin{thm} \label{thm:ChokePoint_PEDPP}
    Let $G$ be a connected graph. The following are equivalent: 
    \begin{enumerate}
        \item $G$ has no choke points.
    
        \item $G$ satisfies the pairwise edge-disjoint path property.
    
        \item $G$ is a tree or a cycle.
    \end{enumerate}
\end{thm}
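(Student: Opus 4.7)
The plan is to prove $(3) \Rightarrow (1)$, $(3) \Rightarrow (2)$, $(1) \Rightarrow (3)$, and $(2) \Rightarrow (3)$, which together yield the full equivalence; the last two implications will share a single case analysis via contrapositive. The two implications from $(3)$ are immediate: if $G$ is a tree, the unique $uv$-path makes PEDPP vacuous and renders $F = \bigcup_{P \in \mathcal{P}(u,v)} P$ a single path of maximum degree $2$, so no $S_3 \subseteq F$ can occur; if $G$ is a cycle, the two $uv$-paths partition $E(G)$ and are therefore edge-disjoint, giving PEDPP, while $F = G$ has maximum degree $2$ and thus contains no $S_3$, ruling out choke points.

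For the contrapositives, I would suppose $G$ is connected but neither a tree nor a cycle, so $G$ contains a cycle $C$ with $G \neq C$. In Case I, $V(G) \supsetneq V(C)$: connectedness yields an edge $zw'$ with $z \in V(C)$ and $w' \notin V(C)$. Letting $a, b$ be the two $C$-neighbors of $z$, both paths $w', z, a$ and $w', z, b, \dots, a$ (continuing around $C$) exist in $G$; they share the edge $w'z$, violating PEDPP between $w'$ and $a$, while forcing the three edges $\{w'z, za, zb\}$ into $F$ and producing an $S_3$ at $z$, hence a choke point. In Case II, $V(G) = V(C)$ and $G$ has a chord $xy$: since $G$ is simple, the chord cannot duplicate a $C$-edge, so both arcs of $C$ from $x$ to $y$ have length at least $2$. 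Letting $x_1, x_2$ be the two $C$-neighbors of $x$, the paths $x_1, x, y$ (via the chord) and $x_1, x, x_2, \dots, y$ (around the $x_2$-side arc) both exist, are distinct, and share the edge $x_1 x$, so PEDPP fails between $x_1$ and $y$; these same two paths contribute the edges $\{x_1 x, x_2 x, xy\}$ at $x$ to $F$, yielding an $S_3$ and making $x$ a choke point.

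The main obstacle is structural bookkeeping rather than any single hard calculation: verifying that the two cases truly exhaust the non-tree, non-cycle connected graphs and that the second path in each case is a genuine simple path. In Case II the critical observation is that simplicity of $G$ forces both arcs to have length at least $2$, which in turn guarantees $x_1, x_2 \neq y$ and that the $x_2$-side arc is internally disjoint from $V(A_1) \setminus \{x, y\}$, so the walk $x_1, x, x_2, \dots, y$ is indeed a simple path distinct from $x_1, x, y$. Once this is checked, the same choice of $(u, v)$ in each case simultaneously witnesses both a choke point (via a vertex of degree at least three in $F$) and a pair of distinct paths sharing an edge, closing both contrapositives in one stroke.
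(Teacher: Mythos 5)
Your proposal is correct, but it is organized differently from the paper's proof. The paper establishes $(1)\Leftrightarrow(2)$ directly by a local argument (two distinct $u,v$-paths sharing an edge force three edges of $\bigcup_{P\in\mathcal{P}(u,v)}P$ to meet at a common vertex, and conversely a star $S_k$ with $k\ge 3$ inside that union lets one assemble two paths sharing an edge), and then separately proves $(2)\Leftrightarrow(3)$; you instead use $(3)$ as the hub, proving $(3)\Rightarrow(1)$, $(3)\Rightarrow(2)$, and then a single contrapositive case analysis showing that a connected graph that is neither a tree nor a cycle simultaneously violates PEDPP and acquires a choke point. Your route buys two things: one case analysis closes two implications at once, and your Case II (spanning cycle plus a chord) explicitly covers a configuration that the paper's proof of $(2)\Rightarrow(3)$ glosses over --- the paper asserts that a non-tree, non-cycle $G$ has a vertex outside ``the cycle,'' which is only true if the cycle is chosen to be non-spanning, a choice the paper does not justify. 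What you give up is the standalone local equivalence $(1)\Leftrightarrow(2)$, which in the paper is independent of the tree/cycle classification and is arguably the more reusable fact. Your structural checks (both arcs of a chord have length at least $2$ by simplicity, hence $x_1,x_2\neq y$ and the second path is genuinely simple; the leaves $w',a,b$ resp.\ $x_1,x_2,y$ are distinct so the star really is an $S_3$) are exactly the points that need verification, and they hold as you state them.
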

\begin{proof}
    We begin with a contrapositive proof that Condition 1 implies Condition 2.  Assume that $G$ does not satisfy the pairwise edge-disjoint path property. Then there exist distinct vertices $u$ and $v$ in $G$ and distinct paths $P_1, P_2 \in \mathcal{P}_G(u,v)$ such that $E(P_1) \cap E(P_2) \neq \emptyset$. 
    Consider $e \in E(P_1) \cap E(P_2)$ such that $e$ is adjacent to an edge $a$ strictly in $P_1$. Such an edge $e$ must exist since $P_1$ and $P_2$ are distinct paths from $u$ to $v$. Note that the edges $e$ and $a$ must both be incident to a common vertex $w$. Now, for the sake of contradiction, we assume that there does not exist an edge $b$ strictly in $P_2$ that is incident to $w$. Then $w$ must be an endpoint of the path $P_2$, so $w = u$ or $w = v$. The path $P_1$, however, does not end at $w$, so $w \not = u$ and $w \not = v$, and we have our contradiction. Thus there does exist an edge $b$ that is strictly in $P_2$ and is incident to $w$. Since the edges $a,e,b$ are all incident to $w$ and all belong to $\bigcup_{P \in \mathcal{P}(u,v)} P$, we see that $w$ is a choke point of $u$ and $v$.

    Next we give a contrapositive proof that Condition 2 implies Condition 1. Assume that G has a choke point $w$. Then there exist distinct vertices $u,v$ in $G$ such that $w$ is the internal vertex of a star $S_k$ with $k\geq 3$ and $S_k \subseteq\bigcup \limits_{P \in \mathcal{P}_{(u,v)}} P$.  Consequently, $w$ has at least 3 distinct incident edges in the set of all paths from u to v. It follows that $w$ has at least one incident edge, say $e_1$, on a path from $u$ to $w$, that $w$ has at least one incident edge, say $e_2$, on a path from $w$ to $v$, and that $w$ has a third distinct incident edge, say $e_3$, which must be either on a path from $u$ to $w$ or on a path from $w$ to $v$.  Without loss of generality, suppose $e_3$ is on a path from $w$ to $v$. Now define $P_1$ to be the path from $u$ to $v$ such that $P_1$ includes $e_1$ and $e_2$ but not $e_3$ and define $P_2$ to be the path from $u$ to $v$ such that $P_2$ includes $e_1$ and $e_3$ but not $e_2$. Then, $P_1$ and $P_2$ are distinct paths from $u$ to $v$ which share an edge. Thus $G$ does not satisfy the PEDPP. 
    
    Now we will prove that Condition 2 holds if and only if Condition 3 holds. To prove Condition 3 implies Condition 2, we consider two cases. First suppose $G$ is a tree. For any $u,v \in V(G)$, there is exactly one path from $u$ to $v$, so $G$ satisfies the PEDPP vacuously. Now consider the case where $G$ is a cycle. Let $u$ and $v$ be distinct vertices in $G$. There are exactly two distinct paths $P_1$ and $P_2$ from $u$ to $v$ in $G$. Because $G$ is a cycle, $E(P_1) \cap E(P_2) = \emptyset$, and $G$ satisfies the PEDPP.
    
    Lastly, we must show that Condition 2 implies Condition 3. Let $G$ be a connected graph satisfying the pairwise edge-disjoint path property. For the sake of contradiction, assume that $G$ is not a tree or cycle. Because $G$ is not a tree, $G$ must contain a cycle.  Because $G$ is also not a cycle, $G$ must contain a vertex $u$ that is not in the cycle but is connected to the cycle by an edge $e$. Let $v$ be a vertex in the cycle not incident to $e$. We can find two distint $u,v$-paths that pass into the cycle through $e$, which is a contradiction to G satisfying the PEDPP. Hence $G$ must be a tree or cycle. 

\end{proof}

We combine Theorem \ref{thm:ChokePoint_PEDPP} with results from Alt{\i}nok et al., stated here as Theorems \ref{UPDTreeThm} and \ref{UDPCycleThm}, to obtain the following corollary.

\begin{cor}
    Let $G$ be a graph satisfying the pairwise edge-disjoint path property. Then UDP must hold on $(G,\alpha)$ for any edge-labeling $\alpha$ over any ring $R$.
\end{cor}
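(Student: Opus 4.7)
The plan is to observe that this corollary is essentially an immediate consequence of Theorem \ref{thm:ChokePoint_PEDPP} combined with Theorems \ref{UPDTreeThm} and \ref{UDPCycleThm}, which have already done all the real work. Because $G$ is assumed to be connected (a standing assumption in this paper) and to satisfy the pairwise edge-disjoint path property, the equivalence $(2) \Leftrightarrow (3)$ in Theorem \ref{thm:ChokePoint_PEDPP} lets me conclude that $G$ is either a tree or a cycle. From there, the result branches into two cases, each handled by a known theorem.

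In the first case, if $G$ is a tree, then by Theorem \ref{UPDTreeThm}, UDP holds on $(G,\alpha)$ for any edge-labeling $\alpha$ over any base ring $R$. In the second case, if $G$ is a cycle, then by Theorem \ref{UDPCycleThm}, the same conclusion holds. Since the universal quantification over $\alpha$ and $R$ matches in both cases, the corollary follows by exhausting the two possibilities.

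Because the corollary is a direct application of existing results, there is no real obstacle; the only subtlety to keep in mind is that we are using the standing convention that all graphs considered are connected (stated in the introduction), which is exactly what is needed in order to apply Theorem \ref{thm:ChokePoint_PEDPP}. The proof therefore amounts to assembling the three cited theorems and noting that they collectively cover every graph satisfying the PEDPP.
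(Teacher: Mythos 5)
Your proof is correct and follows exactly the paper's own argument: apply the equivalence of PEDPP with being a tree or cycle from Theorem \ref{thm:ChokePoint_PEDPP}, then invoke Theorems \ref{UPDTreeThm} and \ref{UDPCycleThm} in the two cases. No differences worth noting.
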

\begin{proof}
    Let $(G, \alpha)$ be an edge-labeled graph over a ring R such that $G$ has the pairwise edge-disjoint path property. By Theorem \ref{thm:ChokePoint_PEDPP}, $G$ is a tree or cycle, and we know from Theorems \ref{UPDTreeThm} and \ref{UDPCycleThm} that UDP holds on any edge-labeled tree or cycle over any ring $R$.

\end{proof}


\section{Subgraphs and UDP}
\label{sec: subgraphs}

In this section, we explore the relationship between UDP and subgraphs. If we have an edge-labeled graph $(G,\alpha)$ and an induced edge-labeled subgraph $(H,\alpha')$, when considering whether or not $(G,\alpha)$ satisfies UDP and whether or not $(H,\alpha')$ satisfies UDP, there are four possible combinations.  We show that all four combinations are realizable.  Then, in our next result, we show that for any ring $R$, any graph $G$, and any subgraph $G'$, if UDP holds on $(G,\alpha)$ for every edge-labeling $\alpha$ over $R$, then UDP must hold on $(G',\beta)$ for any edge-labeling $\beta$ over $R$.  Using this, we are then able to give a complete classification of families of graphs for which UDP must be satisfied for any graph with any edge-labeling over any ring $R$.  Specifically, trees and cycles are the only graphs for which UDP must be satisfied for any edge-labeling over any ring.  We conclude by showing that when $R$ is an integral domain, if there exists a graph $G$ that is neither a tree nor cycle such that every edge-labeling of $G$ over $R$ satisfies UDP, then $R$ is a Prüfer domain.  This allows us to check only edge-labelings over a certain unicyclic graph rather than having to check edge-labelings over every possible graph.

We begin this section by exploring the relationship between UDP on an edge-labeled graph and UDP on a corresponding induced edge-labeled subgraph.

\begin{ex}
Let $R$ be an integral domain that is not a Prüfer domain.  Then there exist non-zero ideals $I,J,K$ of $R$ such that $I+(J\cap K)\neq (I+J)\cap(I+K)$.  Consider the edge-labeled graphs over this integral domain $R$ in Figures \ref{GDHDN}, \ref{GDHD}, \ref{GDNHD}, and \ref{GDNHDN}. In each figure, the graph on the left is an edge-labeled graph $(G, \alpha)$, while the graph on the right is a subgraph $H$ of $G$ with the induced edge-labeling $\alpha'$. In these four figures, we demonstrate the possible combinations that can arise from UDP holding or not holding on an edge-labeled graph and then UDP holding or not holding on a subgraph with the induced edge-labeling. The examples in these figures illustrate that all four possible combinations can indeed occur.

First we consider the edge-labeled graph $(G,\alpha)$ on the left in Figure \ref{GDHDN}.  This graph satisfies the UDP.  This is not immediately obvious, but it can be verified by hand, checking that for each possible pair of distinct vertices $u$ and $v$ in $G$, and for every $x$ in $\bigcap\limits_{P\in\mathcal{P}_G(u,v)}\alpha(P)$, there exists a spline $\rho$ on $G$ such that $\rho(u)-\rho(v)=x$.

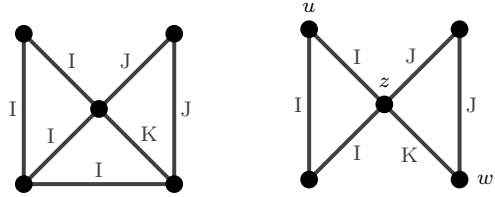
\begin{figure}[h! bt]
    \centering
    \centering
    \begin{tikzpicture}
    \Vertex [size=0.2, x=1, y=1, color=black] {2};
    \Vertex [size=0.2, x=1, y=-1, color=black] {3};
    \Vertex [size=0.2, x=2, color=black] {4};
    \Vertex [size=0.2, x=3, y=1, color=black] {5};
    \Vertex [size=0.2, x=3, y=-1, color=black] {6};
    \Edge [label=I, position=left](2)(3);
    \Edge [label=I, position={above right=0.5mm}](2)(4);
    \Edge [label=I, position={above left=0.5mm}](3)(4);
    \Edge [label=J, position={above left=0.5mm}](4)(5);
    \Edge [label=K, position={above right=0.5mm}](4)(6);
    \Edge [label=J, position=right](5)(6);
    \Edge [label=I, position={above=0.5mm}](3)(6);
    \end{tikzpicture} 
    \hspace{1cm}
    \begin{tikzpicture}
    \Vertex [size=0.2, x=1, y=1, color=black, label = $u$, fontcolor = black, position = above] {2};
    \Vertex [size=0.2, x=1, y=-1, color=black] {3};
    \Vertex [size=0.2, x=2, color=black, label = $z$, fontcolor = black, position = above] {4};
    \Vertex [size=0.2, x=3, y=1, color=black] {5};
    \Vertex [size=0.2, x=3, y=-1, color=black, label = $w$, fontcolor = black, position = right] {6};
    \Edge [label=I, position=left](2)(3);
    \Edge [label=I, position={above right=0.5mm}](2)(4);
    \Edge [label=I, position={below right=0.5mm}](3)(4);
    \Edge [label=J, position={above left=0.5mm}](4)(5);
    \Edge [label=K, position={below left=0.5mm}](4)(6);
    \Edge [label=J, position=right](5)(6);
    \end{tikzpicture} 
    \caption{UDP holds on $(G, \alpha)$ but does not hold on $(H, \alpha')$}
    \label{GDHDN}
\end{figure}
Now we consider the edge-labeled subgraph $(H,\alpha')$ on the right in Figure \ref{GDHDN}.  This subgraph can be viewed as the result of gluing together two $3$-cycles at the vertex $z$.  Since we know UDP is satisfied on each of the $3$-cycles, we can use Theorem \ref{paste} to determine whether UDP holds on $(H,\alpha')$.    Specifically , we will show that Equation \eqref{eq:key pasted graphs equation} does not hold for this choice of vertices $u$ and $w$ in $(H,\alpha')$.  First note that  
\[
\left( \bigcap \limits_{P \in \mathcal{P}_G(u,z)} \alpha(P) \right) + \left( \bigcap \limits_{P \in \mathcal{P}_G(z,w)} \alpha(P) \right)=I+(J\cap K).  
\]
Meanwhile, $\bigcap \limits_{P \in \mathcal{P}_G(u,w)} \alpha(P)=(I+J)\cap(I+K)$.  By our choice of $I,J,K$, we see that Equation \eqref{eq:key pasted graphs equation} does not hold for these vertices $u$ and $w$, so UDP does not hold on $(H,\alpha')$.

Next we turn our attention to the graphs in Figure \ref{GDHD}.  The graph $G$ on the left is a cycle, so UDP will be satisfied on $G$ with any edge-labeling $\alpha$ over any ring $R$, as stated in Theorem \ref{UDPCycleThm}.  Similarly, the subgraph $H$ in Figure 11 is a path, so we know from Theorem \ref{UPDTreeThm} that UDP will be satisfied on $H$ with any edge-labeling over any ring.

\begin{figure}[h! bt]
    \centering
    \begin{tikzpicture}
    \Vertex [size=0.2, color=black] {1};
    \Vertex [size=0.2, x=1, color=black] {2};
    \Vertex [size=0.2, y=1, color=black] {3};
    \Vertex [size=0.2, x=1, y=1, color=black] {4};
    \Edge [label=I, position={below=1mm}](1)(2);
    \Edge [label=J, position={left=1mm}](1)(3);
    \Edge [label=K, position={right=1mm}](2)(4);
    \Edge [label=L, position={above=1mm}](3)(4);
    \end{tikzpicture}
    \hspace{1cm}
    \begin{tikzpicture}
    \Vertex [size=0.2, color=black] {1};
    \Vertex [size=0.2, x=1, color=black] {2};
    \Vertex [size=0.2, y=1, color=black] {3};
    \Vertex [size=0.2, x=1, y=1, color=black] {4};
    \Edge [label=I, position={below=1mm}](1)(2);
    \Edge [label=J, position={left=1mm}](1)(3);
    \Edge [label=K, position={right=1mm}](2)(4);
    \end{tikzpicture}
    \caption{UDP holds on $(G, \alpha)$ and UDP holds on $(H, \alpha')$}
    \label{GDHD}
\end{figure}
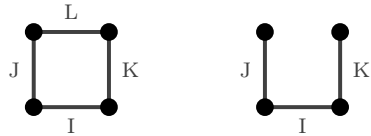
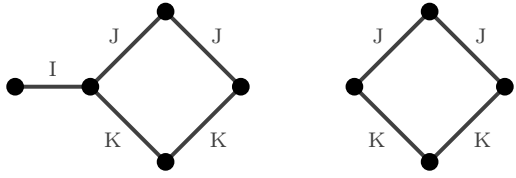
\begin{figure}[h!]
    \centering
    \centering
    \begin{tikzpicture}
    \Vertex [size=0.2, color=black] {1};
    \Vertex [size=0.2, x=1, y=1, color=black] {2};
    \Vertex [size=0.2, x=2, color=black] {3};
    \Vertex [size=0.2, x=1, y=-1, color=black] {4};
    \Vertex [size=0.2, x=-1, color=black] {5}
    \Edge [label=J, position={above left=1mm}](1)(2);
    \Edge [label=J, position={above right=1mm}](2)(3);
    \Edge [label=K, position={below left=1mm}](1)(4);
    \Edge [label=K, position={below right=1mm}](3)(4);
    \Edge [label=I, position={above=1mm}](1)(5);
    \end{tikzpicture}
    \hspace{1cm}
    \begin{tikzpicture}
    \Vertex [size=0.2, color=black] {1};
    \Vertex [size=0.2, x=1, y=1, color=black] {2};
    \Vertex [size=0.2, x=2, color=black] {3};
    \Vertex [size=0.2, x=1, y=-1, color=black] {4};
    \Edge [label=J, position={above left=1mm}](1)(2);
    \Edge [label=J, position={above right=1mm}](2)(3);
    \Edge [label=K, position={below left=1mm}](1)(4);
    \Edge [label=K, position={below right=1mm}](3)(4);
    \end{tikzpicture}
    \caption{UDP does not hold on $(G, \alpha)$ but does hold on $(H, \alpha')$}
    \label{GDNHD}
\end{figure}
We now consider the graphs in Figure \ref{GDNHD}.  The graph $(G,\alpha)$ on the left was discussed in Example \ref{ex:unicyclic graph not UDP}, where it was shown that this graph does not satisfy UDP.  Because the subgraph $H$ on the right is a cycle, it must satisfy UDP for any edge-labeling over any ring $R$.

Lastly, we consider the graphs in Figure \ref{GDNHDN}.  We can view the graph $(G,\alpha)$ as having been formed by pasting together at the vertex $z$ the path consisting of two edges and the $4$-cycle.  Since UDP must be satisfied on the path and on the $4$-cycle, we can once again use Theorem \ref{paste} to determine whether UDP holds on $(G,\alpha)$.    Specifically , we will show that Equation \eqref{eq:key pasted graphs equation} does not hold for this choice of vertices $u$ and $w$ in $(G,\alpha)$.  First note that  
\[
\left( \bigcap \limits_{P \in \mathcal{P}_G(u,z)} \alpha(P) \right) + \left( \bigcap \limits_{P \in \mathcal{P}_G(z,w)} \alpha(P) \right)=I+(J\cap K).  
\]
Meanwhile, $\bigcap \limits_{P \in \mathcal{P}_G(u,w)} \alpha(P)=(I+J)\cap(I+K)$.  By our choice of $I,J,K$, we see that Equation \eqref{eq:key pasted graphs equation} does not hold for these vertices $u$ and $w$, so UDP does not hold on $(G,\alpha)$.  Finally, we again note that the subgraph $(H, \alpha')$ on the right in Figure \ref{GDNHDN} was discussed in Example 2.2, where it was shown that this graph does not satisfy UDP.

\begin{figure}[h! bt]
    \centering
    \begin{tikzpicture}
    \Vertex [size=0.2, color=black, label=$z$, fontcolor= black, position = above] {1};
    \Vertex [size=0.2, x=1, y=1, color=black] {2};
\Vertex [size=0.2, x=2, color=black, label=$w$, fontcolor= black, position = above] {3};
    \Vertex [size=0.2, x=1, y=-1, color=black] {4};
    \Vertex [size=0.2, x=-1, color=black] {5}
    \Vertex [size=0.2, x=-2, color=black,label=$u$, fontcolor= black, position = above] {6}
    \Edge [label=J, position={above left=1mm}](1)(2);
    \Edge [label=J, position={above right=1mm}](2)(3);
    \Edge [label=K, position={below left=1mm}](1)(4);
    \Edge [label=K, position={below right=1mm}](3)(4);
    \Edge [label=I, position={above=1mm}](1)(5);
    \Edge [label=I, position={above=1mm}](6)(5);
    \end{tikzpicture}
    \hspace{1cm}
    \begin{tikzpicture}
    \Vertex [size=0.2, color=black] {1};
    \Vertex [size=0.2, x=1, y=1, color=black] {2};
    \Vertex [size=0.2, x=2, color=black] {3};
     \Vertex [size=0.2, x=2, color=black] {4};
    \Vertex [size=0.2, x=1, y=-1, color=black] {4};
    \Vertex [size=0.2, x=-1, color=black] {5}
    \Edge [label=J, position={above left=1mm}](1)(2);
    \Edge [label=J, position={above right=1mm}](2)(3);
    \Edge [label=K, position={below left=1mm}](1)(4);
    \Edge [label=K, position={below right=1mm}](3)(4);
    \Edge [label=I, position={above=1mm}](1)(5);
    \end{tikzpicture}
    \caption{UDP does not hold on $(G, \alpha)$ nor does UDP hold on $(H, \alpha')$}
    \label{GDNHDN}
\end{figure}
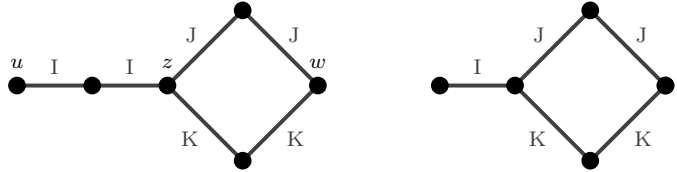
\end{ex}

From Figures \ref{GDHDN}, \ref{GDHD}, \ref{GDNHD}, and \ref{GDNHDN}, we see that if UDP holds on an edge-labeled graph, it may or may not hold on an induced edge-labeled subgraph, and we also see that if UDP does not hold on an edge-labeled graph, it may or may not hold on an induced edge-labeled subgraph.  This concludes our illustration that all four possible combinations are realizable.

In the next lemma, we consider a special case.  We are not just looking at a case where UDP holds on a particular edge-labeled graph over a certain ring $R$, as we did in Figures \ref{GDHDN} and \ref{GDHD}, but we are instead considering the case in which UDP holds on a graph $G$ for any edge-labeling $\alpha$ over any ring $R$.  This lemma will be helpful in classifying what types of graphs must satisfy UDP for any edge-labeling over any ring $R$. 

\begin{lem}\label{Every}
    Let $R$ be a ring and let $G$ be a graph with subgraph $G'$. If UDP holds on $(G,\alpha)$ for every edge-labeling $\alpha$ over $R$, then UDP holds on $(G',\beta)$ for every edge-labeling $\beta$ over $R$.
\end{lem}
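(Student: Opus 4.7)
The plan is to take an arbitrary edge-labeling $\beta$ of $G'$ and extend it to an edge-labeling $\alpha$ of $G$ in such a way that the hypothesis UDP on $(G,\alpha)$ directly forces the desired conclusion on $(G',\beta)$. The natural extension is
\[
\alpha(e) = \begin{cases} \beta(e) & \text{if } e \in E(G') \\ \langle 1 \rangle & \text{if } e \in E(G) \setminus E(G'). \end{cases}
\]
The key reason for choosing the unit ideal $\langle 1 \rangle = R$ on the new edges is that $I + R = R$ for every ideal $I$, so any path in $G$ that uses at least one edge outside $E(G')$ will automatically have $\alpha$-sum equal to $R$ and therefore contribute nothing to the intersection of path-sums.

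Fix $u, w \in V(G')$. The next step is to verify the key equality
\[
\bigcap_{P \in \mathcal{P}_G(u,w)} \alpha(P) = \bigcap_{P \in \mathcal{P}_{G'}(u,w)} \beta(P).
\]
The inclusion $\supseteq$ holds because every $uw$-path in $G'$ is also a $uw$-path in $G$, and $\alpha$ agrees with $\beta$ on such paths. For the reverse inclusion $\subseteq$, I partition $\mathcal{P}_G(u,w)$ into those paths lying entirely in $G'$ and those using at least one edge of $E(G) \setminus E(G')$. The first group contributes exactly the right-hand side to the intersection, while the second group contributes only $R$ by the observation above and thus can be dropped.

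With this equality in hand, the proof finishes quickly. Given any $x \in \bigcap_{P \in \mathcal{P}_{G'}(u,w)} \beta(P)$, the equality places $x$ inside $\bigcap_{P \in \mathcal{P}_G(u,w)} \alpha(P)$. The hypothesis that UDP holds on $(G,\alpha)$ then yields a spline $\rho$ on $(G,\alpha)$ with $\rho(u)-\rho(w)=x$, and I define $\rho' \colon V(G') \to R$ by $\rho'(v) = \rho(v)$ for each $v \in V(G')$. For every edge $ab \in E(G')$, since $\rho$ is a spline on $(G,\alpha)$ we have $\rho'(a)-\rho'(b) = \rho(a)-\rho(b) \in \alpha(ab) = \beta(ab)$, so $\rho'$ is a spline on $(G',\beta)$ realizing the prescribed difference. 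The main obstacle is really just the intersection equality, and even that is mild because the choice of the unit ideal on the extension edges is specifically engineered to make those extra paths invisible to the intersection.
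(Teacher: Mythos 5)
Your proposal is correct and matches the paper's proof essentially step for step: the same extension of $\beta$ to $\alpha$ by assigning $\langle 1 \rangle$ to edges outside $E(G')$, the same partition of the $uw$-paths in $G$ into those lying in $G'$ and those using a new edge, and the same restriction of the resulting spline to $V(G')$. No differences worth noting.
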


\begin{proof}
    \noindent Let $\beta$ be an edge-labeling of $G'$, let $u,v \in V(G')$, and let \mbox{$x \in \bigcap_{P \in \mathcal{P}_{G'}(u,v)} \beta(P)$.} Ultimately, to show that UDP holds on $(G', \beta)$, we will construct a spline $\rho'$ on $G'$ such that $\rho'(u) - \rho'(v) = x$. Our next step is to  define an edge-labeling $\alpha$ on $G$. For any edge $e\in E(G)$, let $\alpha(e) = \beta(e)$ if $e \in E(G')$ and $\alpha(e) = \langle{1}\rangle$ if $e \notin E(G')$. Note that there are exactly two types of paths from $u$ to $v$ in $G$. Let $\mathcal{P}_{1,G}$ denote the set of all paths from $u \text{ to } v$ in $G$ such that the path is contained entirely in $G'$. Let $\mathcal{P}_{2,G}$ denote the set of all paths from $u \text{ to } v$ in $G$ such that the path contains at least one edge not in $G'$. Then we have \[\bigcap\limits_{P \in \mathcal{P}_G(u,v)} \alpha(P) = \left (\bigcap\limits_{P \in \mathcal{P}_{1,G}(u,v)} \alpha(P) \right ) \cap \left (\bigcap\limits_{P \in \mathcal{P}_{2,G}(u,v)} \alpha(P)\right ).\]
    However, since each path in $P_2$ contains an edge labeled $\langle{1}\rangle$, we have \[\bigcap\limits_{P \in \mathcal{P}_{2,G}(u,v)} \alpha(P) = \langle{1}\rangle,\] and hence \[\bigcap\limits_{P \in \mathcal{P}_G(u,v)} \alpha(P) = \left (\bigcap\limits_{P \in \mathcal{P}_{1,G}(u,v)} \alpha(P) \right ) \cap \langle{1}\rangle = \bigcap\limits_{P \in \mathcal{P}_{G'}(u,v)} \beta(P).\] Recall that $x \in \bigcap_{P \in \mathcal{P}_{G'}(u,v)} \beta(P)$. Thus $x \in \bigcap_{P \in \mathcal{P}_{G}(u,v)} \alpha(P)$. Since UDP holds on $G$ for every edge-labeling over $R$, we can choose a spline $\rho$ on $G$ with $\rho(u) - \rho(v) = x$. Define $\rho': V(G') \to R$ by $\rho'(w) = \rho(w)$ for all $w \in V(G')$. Then $\rho'$ is a spline on $G'$ with $\rho'(u) - \rho'(v) = x$. 
\end{proof}

We are now prepared to show that trees and cycles are the only graphs for which UDP must be satisfied for any edge-labeling $\alpha$ over any ring $R$.

\begin{thm}\label{thm:complete characterization}
    \noindent Given a graph $G$, UDP holds on $(G,\alpha)$ for any edge labeling $\alpha$ over any ring $R$ if and only if $G$ is a tree or a cycle.
\end{thm}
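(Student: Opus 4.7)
My plan is to prove the two directions separately. The backward direction is immediate from Theorems~\ref{UPDTreeThm} and~\ref{UDPCycleThm}. For the forward direction, I argue the contrapositive: assuming $G$ is neither a tree nor a cycle, I will produce a ring $R$ (concretely $R=\mathbb{Z}[x]$) and an edge-labeling $\alpha'$ on $G$ for which UDP fails on $(G,\alpha')$. The overall strategy is to identify inside $G$ a subgraph $H$ of a very restricted form---a cycle with a single pendant vertex attached by one edge---exhibit an edge-labeling on $H$ that violates UDP via Theorem~\ref{paste}, and then transfer the failure to $G$ via the contrapositive of Lemma~\ref{Every}.

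The first step is to show that such a cycle-with-pendant subgraph $H$ appears in any $G$ that is not a tree and not a cycle. Since $G$ is not a tree, it contains a cycle $C$. If some vertex of $G$ lies outside $V(C)$, connectedness of $G$ together with a shortest-path argument yields a vertex $u'\notin V(C)$ adjacent to some $c\in V(C)$, so that $C\cup\{u'c\}$ is the desired subgraph. Otherwise $V(G)=V(C)$, so since $G\neq C$ there is an edge of $G$ not in $C$ which, as $G$ is simple, must be a chord $c_ic_j$ joining two vertices non-adjacent in $C$. The chord together with one arc of $C$ between $c_i$ and $c_j$ forms a strictly smaller cycle $C'$; because $c_i$ and $c_j$ are non-adjacent in $C$ the other arc has length at least $2$, so the vertex immediately following $c_i$ on this other arc is adjacent to $c_i$ in $G$ but lies outside $V(C')$, and adjoining this vertex and its edge to $c_i$ produces the cycle-with-pendant subgraph $H$ of $G$.

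Next I would construct a labeling $\alpha$ on $H$ that violates UDP. Let $c_0$ be the cycle vertex where the pendant edge attaches, let $u$ be the pendant vertex, and let $c_k$ be any cycle vertex distinct from $c_0$. Since $R=\mathbb{Z}[x]$ is an integral domain but not a Prüfer domain, I can choose nonzero ideals $I,J,K$ of $R$ with $(I+J)\cap(I+K)\neq I+(J\cap K)$. Label the pendant edge $uc_0$ with $I$, label every edge of one of the two $c_0c_k$-arcs of the cycle with $J$, and label every edge of the other arc with $K$. A direct computation then yields
\[
\bigcap_{P\in\mathcal{P}_H(u,c_0)}\alpha(P)=I,\qquad \bigcap_{P\in\mathcal{P}_H(c_0,c_k)}\alpha(P)=J\cap K,\qquad \bigcap_{P\in\mathcal{P}_H(u,c_k)}\alpha(P)=(I+J)\cap(I+K).
\]
Since $H$ is the pasting at $c_0$ of the cycle and the single-edge tree $\{uc_0\}$, and both of these satisfy UDP, Theorem~\ref{paste} applied to the pair $(c_k,u)$ shows that UDP on $(H,\alpha)$ would force $(I+J)\cap(I+K)=(J\cap K)+I$, which is false by the choice of $I,J,K$. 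Hence UDP fails on $(H,\alpha)$.

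Finally, by the contrapositive of Lemma~\ref{Every}, since $H$ is a subgraph of $G$ and UDP fails on some edge-labeling of $H$ over $R$, there must be an edge-labeling $\alpha'$ of $G$ over $R$ for which UDP fails on $(G,\alpha')$, completing the contrapositive. The main technical obstacle is the chord case of the structural step: one must ensure that the chosen pendant vertex truly lies outside the smaller cycle $C'$, and this ultimately comes down to verifying that both arcs of $C$ between the chord endpoints have length at least $2$, which is exactly guaranteed by those endpoints being non-adjacent in $C$.
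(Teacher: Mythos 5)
Your proof is correct and follows essentially the same route as the paper: locate a cycle-with-pendant-edge subgraph, label the pendant edge $I$ and the two $z,w$-arcs $J$ and $K$, use Theorem~\ref{paste} to show UDP fails on that subgraph, and transfer the failure to $G$ via the contrapositive of Lemma~\ref{Every}. The one place you go beyond the paper is the structural step: the paper simply asserts that a vertex outside the chosen cycle $C$ adjacent to $C$ exists, whereas you correctly note that when $V(G)=V(C)$ the extra edge is a chord and one must pass to the smaller cycle it creates to obtain the pendant vertex --- a small gap in the paper's argument that your version fills.
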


\begin{proof}
    \noindent Suppose that $G$ is a tree or a cycle. Then from Theorems \ref{UPDTreeThm} and \ref{UDPCycleThm}, we have that UDP holds on $(G,\alpha)$ for any edge labeling $\alpha$ over any ring $R$.
\\

    \begin{figure}[hbt!]
            \centering
            \begin{tikzpicture}
                \Vertex [size=.2, color=black, label=$z$, position=above, fontcolor=black] {1};
                \Vertex [size=.2, x=1, y=1, color=black] {2};
                \Vertex [size=.2, x=2, color=black, label=$w$, position=above, fontcolor=black] {3};
                \Vertex [size=.2, x=1, y=-1, color=black] {4};
                \Vertex [size=.2, x=-1, color=black, label=$u$, position=above, fontcolor=black] {5};
                \Edge [label=$J$, position={above left=1mm}](1)(2);
                \Edge [style=dashed, label=$J$, position={above right=1mm}](2)(3);
                \Edge [label=$K$, position={below left=1mm}](1)(4);
                \Edge [label=$I$, position={above=1mm}](1)(5);
                \Edge [style=dashed, label=$K$, position={below right=1mm}](3)(4);
            \end{tikzpicture}
            \caption{Unicyclic Subgraph}
            \label{Subgraph}
        \end{figure}
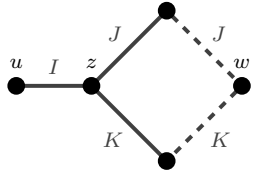

Next we will give a contrapositive proof of the forward direction.  Let $G$ be a graph that is neither a tree nor a cycle. Let $R$ be a ring with ideals $I,J,K$  such that $I+(J\cap K)\neq (I+J)\cap(I+K)$.  Since $G$ is not a tree, there exists a cycle $C$ within $G$, and since $G$ is not a cycle, there exists a vertex $u$ not in $C$ that is adjacent to some vertex $z$ in $C$. Define a subgraph $G'$ of $G$ with $V(G')=V(C)\cup\{u\}$ and $E(G')=E(C)\cup\{uz\}$ as in Figure \ref{Subgraph}. Let $w$ be a vertex in $C$ that is distinct from $z$. Then, define $\alpha':E(G')\rightarrow{I(R)}$ such that $\alpha'(uz)=I$, for any edge $e$ in the clockwise direction from $z$ to $w$, $\alpha'(e)=J$, and for any edge $e'$ in the counterclockwise direction from $z$ to $w$, $\alpha'(e')=K$. Note that since $C$ is a cycle and the edge $uz$ along with vertices $u$ and $z$ defines a path, we can apply Theorem \ref{paste} to $(G', \alpha')$ to determine whether UDP holds on $(G', \alpha')$. We have $\bigcap_{P\in\mathcal{P}_{(u,w)}}\alpha'(P)=(I+K)\cap(I+J)$ and $(\bigcap_{P\in\mathcal{P}_{(u,z)}}\alpha'(P))+(\bigcap_{P\in\mathcal{P}_{(z,w)}}\alpha'(P))=I+(J\cap{K})$, but $(I+J)\cap(I+K)\neq{I+(J\cap{K})}$, so UDP does not hold on $(G', \alpha')$. Thus, there exists an edge-labeling $\alpha'$ on a subgraph $G'$ of $G$ such that UDP does not hold on $(G',\alpha')$, and it follows from the contrapositive of Lemma \ref{Every} that there exists an edge-labeling on $G$ such that UDP does not hold.
\end{proof}

We now prove a theorem that will allow us to check only edge-labelings over a particular unicyclic graph rather than edge-labelings over every possible graph.

\begin{thm}\label{latest from Remi}
Let $R$ be an integral domain.  If there exists a graph $G$ that is neither a tree nor a cycle such that every edge-labeling of $G$ over $R$ satisfies UDP, then $R$ is a Prüfer domain.
\end{thm}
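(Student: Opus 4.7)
My plan is to apply the unicyclic-subgraph construction from the proof of Theorem \ref{thm:complete characterization}, together with Lemma \ref{Every}, Theorem \ref{paste}, and the well-known equivalence of the two distributive laws in any lattice.

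First, since $G$ is not a tree, it contains a cycle $C$; and since $G$ is connected but is not merely the cycle $C$, some vertex $u \notin V(C)$ is adjacent to a vertex $z \in V(C)$. I let $G'$ be the subgraph with $V(G') = V(C) \cup \{u\}$ and $E(G') = E(C) \cup \{uz\}$, and fix any $w \in V(C) \setminus \{z\}$. For arbitrary nonzero ideals $I, J, K$ of $R$, I define an edge-labeling $\alpha'$ on $G'$ exactly as in the proof of Theorem \ref{thm:complete characterization}: $\alpha'(uz) = I$, every edge on one $zw$-arc of $C$ is labeled $J$, and every edge on the other $zw$-arc is labeled $K$.

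By hypothesis every edge-labeling of $G$ over $R$ satisfies UDP, so Lemma \ref{Every} implies UDP holds on $(G', \alpha')$. Since $G'$ is the cycle $C$ pasted at $z$ to the single-edge tree $uz$, and both satisfy UDP by Theorems \ref{UPDTreeThm} and \ref{UDPCycleThm}, Theorem \ref{paste} forces the identity
\[
\bigcap_{P \in \mathcal{P}_{G'}(u,w)} \alpha'(P) \;=\; \bigcap_{P \in \mathcal{P}_{G'}(u,z)} \alpha'(P) \;+\; \bigcap_{P \in \mathcal{P}_{G'}(z,w)} \alpha'(P).
\]
Computing each term exactly as in the proof of Theorem \ref{thm:complete characterization} yields $(I+J) \cap (I+K) = I + (J \cap K)$.

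Since $I, J, K$ were arbitrary nonzero ideals (and the identity is trivial whenever any one ideal is zero), the lattice of ideals of $R$ satisfies one of the two distributive laws. Appealing to the standard lattice-theoretic fact that either distributive law implies the other, one also has $I \cap (J+K) = (I \cap J) + (I \cap K)$ for all nonzero $I, J, K$, and by Definition \ref{def:Prufer domain} this means $R$ is a Prüfer domain. I anticipate that the main subtlety will be this last step: one must either invoke the equivalence of the two distributive laws directly, or cite one of the alternate characterizations of Prüfer domains collected in \cite{Jensen paper} to bridge the form of the equation that Theorem \ref{paste} delivers with the form appearing in the paper's definition.
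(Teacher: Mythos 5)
Your proof is correct, but it runs in the opposite direction from the paper's and leans on different machinery. The paper argues the contrapositive: assuming $R$ is not Prüfer, it exhibits the fixed $4$-vertex unicyclic graph $H$ of Figure \ref{fig:4-vertex unicyclic graph} with a bad labeling, observes that any connected $G$ that is neither a tree nor a cycle contains a subdivision of $H$, and propagates the failure of UDP upward using Theorem \ref{extension} (subdivisions preserve non-UDP) followed by Lemma \ref{Every}. You instead argue directly: restrict to the cycle-plus-pendant-edge subgraph $G'$ of $G$ via Lemma \ref{Every}, label the two $zw$-arcs constantly with $J$ and $K$ so that no subdivision step is needed, and read off $(I+J)\cap(I+K)=I+(J\cap K)$ for arbitrary nonzero $I,J,K$ from Theorem \ref{paste}. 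The two arguments are contrapositives of one another; yours trades Theorem \ref{extension} for a second application of Theorem \ref{paste}, and it has the virtue of making explicit both the quantification over all ideal triples and the appeal to the equivalence of the two distributive laws in the ideal lattice --- a step the paper also relies on (its proof opens by translating ``not Prüfer'' into failure of the sum-over-intersection identity, whereas Definition \ref{def:Prufer domain} is stated with the intersection-over-sum identity) but leaves implicit behind the citation of \cite{Jensen paper}. One small caveat, which you inherit verbatim from the proof of Theorem \ref{thm:complete characterization}: the claim that some vertex $u\notin V(C)$ is adjacent to $C$ requires $C$ to be chosen as a shortest (hence induced) cycle, since otherwise $G$ could consist of $C$ together with chords and no outside vertex; the same refinement is needed to justify the paper's own assertion that $G$ is a supergraph of a subdivision of $H$.
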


\begin{proof}
We give a contrapositive proof.  Suppose that $R$ is an integral domain that is not a Prüfer domain, so there exist ideals $I,J,K$ of $R$ such that $I+(J\cap K)\neq (I+J)\cap (I+K)$.  Then the unicyclic graph $H$ in Figure \ref{fig:4-vertex unicyclic graph} is an edge-labeled graph over $R$ on which UDP does not hold.  Let $G$ be a graph that is neither a tree nor a cycle.  Then $G$ is a supergraph of a subdivision of the graph $H$ in Figure \ref{fig:4-vertex unicyclic graph}.  Since $H$ does not satisfy UDP, we know from Theorem \ref{extension} that no subdivision of $H$ satisfies UDP.  Then by Lemma \ref{Every} we know that there is some edge-labeling $\alpha$ of $G$ over $R$ for which $(G,\alpha)$ does not satisfy UDP.
\end{proof}

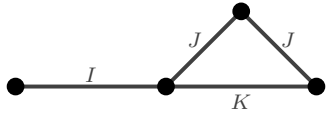
\begin{figure}[h! bt]
    \centering
    \begin{tikzpicture}
    \Vertex[size=0.2, color=black]{1}
    \Vertex[size=0.2, color=black, x=2]{2}
    \Vertex[size=0.2, color=black, x=4]{3}
    \Vertex[size=0.2, color=black, x=3, y=1]{4}
    \Edge[label=$I$, position=above](1)(2)
    \Edge[label=$K$, position=below](2)(3)
    \Edge[label=$J$, position=above left=1mm](2)(4)
    \Edge[label=$J$, position=above right=1mm](3)(4)
    \end{tikzpicture}
    \caption{An edge-labeled unicylcic graph $H$}
    \label{fig:4-vertex unicyclic graph}
\end{figure}

\begin{cor}{result of Remi extra work}
Let $R$ be an integral domain.  The following are equivalent:
\begin{enumerate}
\item\label{Prüfer domain} $R$ is a Prüfer domain.
\item \label{every graph} Every edge-labeled graph over $R$ satisfies UDP.
\item \label{theorem above} There exists a graph $G$ that is neither a tree nor a cycle such that every edge-labeling of $G$ over $R$ satisfies UDP.
\end{enumerate}
\end{cor}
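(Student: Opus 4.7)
The plan is to establish the equivalence by the short cycle $(1)\Rightarrow(2)\Rightarrow(3)\Rightarrow(1)$, invoking results already proved above; almost all of the genuine mathematical content is absorbed into those earlier theorems, so the proof is really just a matter of wiring things together correctly.

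First, for $(1)\Rightarrow(2)$, I would simply appeal to the forward direction of Theorem \ref{intersection_over_sum}: that theorem states precisely that if $R$ is a Prüfer domain, then every edge-labeled graph over $R$ satisfies UDP.

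Next, for $(2)\Rightarrow(3)$, the step is essentially trivial: it suffices to exhibit a single graph that is neither a tree nor a cycle. The unicyclic graph $H$ pictured in Figure \ref{fig:4-vertex unicyclic graph} is such a graph, and under hypothesis $(2)$ every edge-labeling of $H$ over $R$ satisfies UDP. Any other fixed non-tree, non-cycle graph would also work; I will just point to $H$ for concreteness.

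Finally, $(3)\Rightarrow(1)$ is exactly the content of Theorem \ref{latest from Remi}, so I would simply cite it.

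The main obstacle is essentially non-existent, because all the difficult work — the characterization of Prüfer domains in terms of UDP on arbitrary graphs, and the reduction from arbitrary non-tree/non-cycle graphs to the small unicyclic graph $H$ via subdivisions and Lemma \ref{Every} — has already been carried out. The only point worth being slightly careful about is confirming the existence of at least one graph that is neither a tree nor a cycle, which is immediate from $H$ itself.
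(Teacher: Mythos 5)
Your proposal is correct and follows essentially the same route as the paper: the cycle $(1)\Rightarrow(2)$ via Theorem \ref{intersection_over_sum}, $(2)\Rightarrow(3)$ trivially by exhibiting any non-tree, non-cycle graph, and $(3)\Rightarrow(1)$ via Theorem \ref{latest from Remi}. No gaps.
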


\begin{proof}
Let $R$ be an integral domain.  The equivalence of (\ref{Prüfer domain}) and (\ref{every graph}) was shown in \cite{UDP} and is stated here as Theorem \ref{intersection_over_sum}.  Note that (\ref{every graph}) implies (\ref{theorem above}) is trivially true, and (\ref{theorem above}) implies (\ref{Prüfer domain}) is Theorem \ref{latest from Remi}.
\end{proof}

From this corollary, it follows that when working over an integral domain $R$, one can verify that UPD holds for every edge-labeled graph over $R$ by verifying that UDP holds over every possible edge-labeling of the unicyclic graph in Figure \ref{fig:4-vertex unicyclic graph}.


\section*{Declaration of competing interest}

The authors declare that they have no known competing financial interests or personal relationships that could have appeared to influence the work reported in this paper.

\section*{Data availability}

No data was used for the research described in this article.

\section*{Acknowledgements}

The authors acknowledge support from the University of Texas at Tyler Research Experience for Undergraduates, NSF Grant DMS-2149921, in which the second, third, fourth, and fifth authors participated during Summer 2024, under the mentorship of the first author.




\end{document}